\newcommand{\Nat}{{\mathbb N}}
\newcommand{\Real}{{\mathbb R}}
\newcommand{\R}{{\mathbb R}}
\newcommand{\eps}{\varepsilon}
\renewcommand{\ss}{\frac{s^2}{6}}
\renewcommand{\le}{\leqslant}
\renewcommand{\ge}{\geqslant}
\renewcommand{\leq}{\leqslant}
\renewcommand{\geq}{\geqslant}
\newtheorem{theorem}{Theorem}
\newtheorem{claim}{Claim}
\newtheorem{prop}{Proposition}
\newtheorem{corollary}{Corollary}
\newtheorem{defin}{Definition}
\DeclareMathOperator{\aff}{aff}
\begin{document}

\title{New bounds for the distance Ramsey number\footnote{This work is supported by the grant N
12-01-00683 of the Russian Foundation for Basic Research and by the grant NSh-2519.2012.1 of the Leading Scientific Schools
of Russia.}}

\author{A.B. Kupavskii\footnote{Moscow State University, Mechanics and Mathematics Faculty, Department of Number Theory;
Moscow Institute of Physics and Technology, Faculty of Innovations and High Technology, Department of Discrete
Mathematics; Yandex research laboratories.}, A.M. Raigorodskii\footnote{Moscow State University, Mechanics and Mathematics Faculty, Department of Mathematical Statistics
and Random Processes; Moscow Institute of Physics and Technology, Faculty of Innovations and High Technology, Department of Discrete
Mathematics; Yandex research laboratories.},
M.V. Titova\footnote{Moscow State University, Mechanics and Mathematics Faculty, Department of Mathematical Statistics
and Random Processes.}}
\date{}
\maketitle

\begin{abstract} In this paper we study the distance Ramsey number $R_{{\it D}}(s,t,d)$. The \textit{distance Ramsey number} $R_{{\it D}}(s,t,d) $ is the minimum
number $n$ such that for any graph $ G $ on $ n $ vertices, either $G$ contains an induced $ s $-vertex
subgraph isomorphic to a distance graph in $ \Real^d $ or $ \bar {G} $ contains an induced $ t $-vertex subgraph isomorphic to the distance graph in $ \Real^d $. We obtain the upper and lower bounds on $R_{{\it D}}(s,s,d),$ which are similar to the bounds for the classical Ramsey number $R\left(\left \lceil \frac{s}{[d/2]} \right \rceil, \left \lceil \frac{s}{[d/2]} \right \rceil\right)$.

\end{abstract}

\begin{section} {Introduction}

In this paper we analyze properties of distance graphs from the point of view of Ramsey theory (see \cite{TRam}, \cite{Ram}).
Let us remind the notion of distance graph.

\begin{defin}
{\rm A graph $G$ is the} (unit) distance graph {\rm in $d$-dimensional Euclidean space $\R^d$ if} 
$$
V(G) \subseteq \R^d; \,\,\,\,\,\  E(G) \subseteq \{(x; y) \in V^2: \,\,\, |x-y| = 1\}.
$$
\end{defin}

The study of various properties of finite distance graphs was motivated by Erd\H{o}s' work \cite{Erd1}, where he stated three fundamental problems of combinatorial geometry. One of the problems is the following: how many can there be unit distances among $n$ points on the plane? In terms of distance graphs this question can be stated as follows. Let $G$ be a distance graph in $\R^2$. What is the maximum value of $|E(G)|$ provided that $|V(G)|=n$?

Another problem that is closely related to properties of distance graphs is the famous Nelson--Hadwiger problem on finding the chromatic number $ \chi(\Real^d) $ of the space (see \cite{Rai1}). On the one hand, for every distance graph $G$ in $\R^d$ we have $\chi(G)\le \chi(\R^d)$, where $ \chi(G) $ is the usual chromatic number of the graph. On the other hand, Erd\H{o}s-- de Bruijn theorem (see \cite{ErBr}) states that $ \chi\left(\Real^d\right) = \chi(H)$ for some \textit{finite }distance graph $H$ in $\R^d$.

These and other well-known problems such as Borsuk's partition problem (see \cite{Rai2}, \cite{Rai3})
give the motivation to analyze different properties of finite distance graphs (various problems concerning distance graphs can be found in \cite{BMP}).

Another combinatorial field, which lies at the basis of this work, is Ramsey theory. Recall the definition of the Ramsey numbers $R(s,t)$.

\begin{defin} {\rm Given} $ s,t \in \Nat,$ the \textit{Ramsey number $R(s,t)$} {\rm is the minimum
number $n$ such that for any graph $ G $ on $ n $ vertices,
either $G$ contains an $ s $-vertex independent set (i.e., a set without edges) or its complement $ \bar{G} $ contains a $t$-vertex independent set.}
\end{defin}

The main concept in this work is that of \textit{distance Ramsey number}.

\begin{defin} The \textit{distance Ramsey number} $R_{{\it D}}(s,t,d) $ {\rm is the minimum
number $n$ such that for any graph $ G $ on $ n $ vertices, either $G$ contains an induced $ s $-vertex
subgraph isomorphic to the distance graph in $ \Real^d $ or $ \bar {G} $ contains an induced $ t $-vertex subgraph isomorphic to the distance graph in $ \Real^d $.}
\end{defin}

Since for every $d\ge 1$ an independent set of any finite size can be realized as the distance graph in $\R^d$, we have the following obvious inequality: $ R_{{\it D}}(s,t,d) \le R(s,t) $.

Best known bounds for classical Ramsey numbers are the following:
$$
\frac{\sqrt{2}}{e} (1+o(1)) s 2^{\frac{s}{2}} \le R(s,s) \le
e^{-\gamma \frac{{\rm ln}^2\,s}{{\rm ln}\,{\rm ln}\,s}}
\cdot 4^s, \,\,\, \gamma > 0.
$$

The lower bound is due to Spencer and can be found in \cite{AS}, the upper bound is due to Conlon \cite{Con}.

Conlon's bound immediately implies the following upper bound on diagonal distance Ramsey numbers:

$$
R_{{\it D}}(s,s,d) \le 4^s\,e^{- \gamma\frac{\ln^2 s}{\ln \ln s}}, ~~ \gamma > 0.
$$

The concept of distance Ramsey number was introduced and studied in the paper\cite{Rai03}, in which several asymptotic lower bounds were obtained.
Distance Ramsey number was also studied in \cite{RKT} and \cite{RaiTi1}. In these papers authors introduced different methods to obtain lower bounds on $R_{{\it D}}(s,t,d)$ for the case of small fixed $d$. The sharpest bounds for $d\in\{2,\ldots,8\}$ are stated in the following theorems
(Theorems \ref{T:oldd2}, \ref{T:oldd3} see in \cite{RaiTi1},
Theorem \ref{T:oldd4d8} see in \cite{RKT}).

\begin{theorem} \label{T:oldd2} Let $d = 2$. There exists a positive constant $c$ such that

$$
R_{{\it D}}(s,s,d) \ge 2^{{\frac s2} - c\,s^{\frac 13}\ln s}.
$$

\end{theorem}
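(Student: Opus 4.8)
The plan is to prove a lower bound of the form $R_{{\it D}}(s,s,2) \ge 2^{s/2 - cs^{1/3}\ln s}$ by exhibiting, for suitable $n$ close to $2^{s/2}$, a graph $G$ on $n$ vertices such that neither $G$ nor $\overline{G}$ contains an induced $s$-vertex subgraph realizable as a distance graph in the plane. The natural strategy is probabilistic: take $G = G(n,1/2)$, the random graph, and show that with positive probability it avoids all induced copies of planar distance graphs on $s$ vertices, and simultaneously its complement (also distributed as $G(n,1/2)$) does the same. The crucial point that makes this beat the classical Ramsey threshold $2^{s/2}$ is that distance graphs in $\Real^2$ are \emph{structurally restricted}: not every $s$-vertex graph arises as a planar distance graph, so we do not need to kill \emph{all} $s$-vertex induced subgraphs, only the relatively sparse family of those isomorphic to planar distance graphs. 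This gives us more room, hence the improvement over the bare $s/2$ exponent.

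The key steps would be as follows. \textbf{Step 1:} Characterize or at least bound the number of graphs on $s$ vertices that are isomorphic to a distance graph in $\Real^2$. The governing fact is that planar distance graphs are sparse --- by the Erd\H{o}s unit-distance bound, an $s$-vertex distance graph in the plane has at most $O(s^{4/3})$ edges. This edge bound is the source of the $s^{1/3}$ correction term in the exponent. \textbf{Step 2:} Use this to count the number of \emph{labelled} target graphs $H$ on $s$ vertices that are candidate induced distance graphs: since each has at most $ms = O(s^{4/3})$ edges, the number of such graphs is at most $\binom{\binom{s}{2}}{\le m_s}$, which is roughly $2^{O(s^{4/3}\ln s)}$ rather than the full $2^{\binom{s}{2}}$. \textbf{Step 3:} For each fixed target $H$ and each choice of $s$ vertices among the $n$, the probability that those vertices induce exactly $H$ in $G(n,1/2)$ is $2^{-\binom{s}{2}}$; bound the expected number of induced copies (of any planar distance graph, in either $G$ or $\overline G$) by a union bound over targets and vertex-sets. \textbf{Step 4:} Optimize $n$ so that this expectation is below $1$; solving the resulting inequality $\binom{n}{s}\cdot(\text{number of targets})\cdot 2^{-\binom{s}{2}} < 1$ yields $n \ge 2^{s/2 - cs^{1/3}\ln s}$ after balancing the $2^{O(s^{4/3}\ln s)}$ counting factor against the $2^{-\binom{s}{2}} = 2^{-s^2/2 + O(s)}$ probability factor.

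The main obstacle I anticipate is Step 1, the combinatorial characterization and counting of planar distance graphs. Controlling the \emph{number of isomorphism types} (or labelled realizations) of $s$-vertex distance graphs in $\Real^2$ requires more than the edge bound alone: one must argue that the sparsity $|E(H)| = O(s^{4/3})$ genuinely constrains the count of admissible $H$, and ideally that distance graphs additionally avoid certain dense substructures (for instance large complete bipartite subgraphs, which cannot be embedded because $K_{2,3}$ is not a unit-distance graph in the plane). Getting the exponent constant in the $s^{1/3}\ln s$ term sharp will hinge on how tightly one counts these forbidden-subgraph-restricted graphs, and on carefully matching the two competing exponential factors. A secondary technical point is ensuring the union bound simultaneously covers induced copies in both $G$ and $\overline{G}$, which merely doubles the expectation and does not affect the leading asymptotics, so the genuine difficulty is concentrated in the enumeration of planar distance graphs.
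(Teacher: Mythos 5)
Your proposal is correct in outline, and it is essentially the proof that this paper attributes to \cite{RaiTi1}: the theorem is stated here without proof, but the paper records that the cited argument rests precisely on the fact that an $s$-vertex distance graph in $\R^2$ has at most $s^{2-\varepsilon}$ edges --- which is exactly your Steps 1--4. Two remarks, the first of which removes your own main worry. The obstacle you flag in Step 1 is illusory: you never need to characterize, enumerate, or otherwise understand which sparse graphs genuinely are planar distance graphs, and forbidden substructures such as $K_{2,3}$ play no role. Since edge count is an isomorphism invariant, the probability that $G[S]$ is isomorphic to \emph{some} distance graph in $\R^2$ is at most $P\bigl(e(G[S])\le Cs^{4/3}\bigr)=2^{-\binom{s}{2}}\sum_{i\le Cs^{4/3}}\binom{\binom{s}{2}}{i}\le 2^{-\binom{s}{2}+O(s^{4/3}\ln s)}$; bounding the target family from above by the larger family of all graphs with at most $Cs^{4/3}$ edges is all the union bound requires, and then $2n^s\cdot 2^{-\binom{s}{2}+O(s^{4/3}\ln s)}<1$ indeed yields $n=2^{s/2-c\,s^{1/3}\ln s}$ for suitable $c$ and all large $s$. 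Second, an attribution point that matters quantitatively: the $O(s^{4/3})$ unit-distance bound is the Spencer--Szemer\'edi--Trotter theorem, not Erd\H{o}s's own bound, which is $O(s^{3/2})$ and would only give the weaker correction term $s^{1/2}\ln s$ in the exponent. For contrast with the machinery this paper actually develops (Theorem \ref{T:klika}, R\"odl packings, random permutations): that method exists because edge sparsity fails outright for $d\ge 4$ (e.g.\ $K_{m,m}$ embeds in $\R^4$), so one must control $\left(\left[d/2\right]+1\right)$-clique counts instead, and since clique indicators in $G(n,1/2)$ are not independent, the simple binomial-tail estimate is unavailable and is replaced by the permutation argument; applied to $d=2$ that method recovers only $2^{(1/2-\bar o(1))s}$ with an inexplicit $\bar o(1)$, whereas the direct argument you propose is exactly what produces the explicit $s^{1/3}\ln s$ term.
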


\begin{theorem} \label{T:oldd3} Let $d = 3$. There exists a positive constant $c$ such that

$$
R_{{\it D}}(s,s,d) \ge 2^{{\frac s2} - c\, \beta(s) s^{\frac 12}\ln s},
$$
where $ \beta(s) = 2^{\alpha^2(s)} $, and $ \alpha(s) $ is inverse Ackermann function.

\end{theorem}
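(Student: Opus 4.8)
The plan is to run the probabilistic (first-moment) method on the random graph $G(n,1/2)$, just as in the classical lower bound $R(s,s)\ge 2^{s/2}$, but gaining from the fact that distance graphs in $\R^3$ are sparse and therefore form a very thin family. Fix an $s$-element vertex set $S$. Then $G[S]$ is a uniformly random labelled graph on $s$ vertices, and the same is true of $\overline{G}[S]$; hence the probability that either of them is (isomorphic to) a distance graph in $\R^3$ is at most $N_3(s)/2^{\binom{s}{2}}$, where $N_3(s)$ is the number of labelled graphs on $s$ vertices realizable as distance graphs in $\R^3$. By the union bound the expected number of \emph{bad} $s$-subsets — those inducing a distance graph in $G$ or in $\overline{G}$ — is at most $2\binom{n}{s}N_3(s)/2^{\binom{s}{2}}$. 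If $n$ is chosen so that this is below $1$, a graph with no bad subset exists, witnessing $R_{{\it D}}(s,s,3)>n$.

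The decisive input is an upper bound on $N_3(s)$, and this is where the geometry enters. I would invoke the classical bound on unit distances in $\R^3$: among any $n$ points the number of pairs at distance exactly $1$ is $O(n^{3/2}\beta(n))$, with $\beta(n)=2^{\alpha^2(n)}$. Because every edge of a distance graph is a unit-distance pair in one fixed configuration, any distance graph in $\R^3$ on $s$ vertices has at most $m:=Cs^{3/2}\beta(s)$ edges. Counting labelled graphs with at most $m$ edges then gives
$$
N_3(s)\ \le\ \sum_{k=0}^{m}\binom{\binom{s}{2}}{k}\ \le\ 2^{O(s^{3/2}\beta(s)\ln s)},
$$
since $m\ll\binom{s}{2}$ and $\binom{\binom{s}{2}}{k}\le (s^2)^k$.

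Substituting this into the first-moment inequality $2\binom{n}{s}N_3(s)2^{-\binom{s}{2}}<1$ and using $\binom{n}{s}\le(en/s)^s$, taking $s$-th roots reduces the condition to $n\lesssim s\cdot 2^{s/2}\cdot N_3(s)^{-1/s}$ up to constants. As $N_3(s)^{1/s}=2^{O(s^{1/2}\beta(s)\ln s)}$, the polynomial factor $s=2^{\log_2 s}$ is absorbed into the error term and one may take $n=2^{s/2-c\beta(s)s^{1/2}\ln s}$ for a suitable $c>0$, which is the claimed bound. (The same scheme gives Theorem \ref{T:oldd2} if one instead feeds in the planar unit-distance bound $O(n^{4/3})$, with exponent $s^{1/3}$ in place of $s^{1/2}\beta(s)$.)

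The main obstacle is entirely the geometric ingredient: the whole gain over $R(s,s)$, and in particular the inverse-Ackermann factor $\beta(s)$ in the exponent, is inherited from the sharp $O(n^{3/2}\beta(n))$ bound for unit distances in three-space — the probabilistic part is routine once $N_3(s)$ is controlled. The only remaining care is bookkeeping: checking that the polynomial-in-$s$ gains coming from $\binom{n}{s}$ and from $s!$ are swallowed by the $c\beta(s)s^{1/2}\ln s$ term without eroding the leading $\tfrac{s}{2}$, and noting that the empty graph is a distance graph in every dimension, so independent sets are automatically forbidden on both the $G$ and the $\overline{G}$ side.
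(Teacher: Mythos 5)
Your proposal is correct, and it is essentially the approach behind this theorem: the paper does not reprove it (it cites \cite{RaiTi1}), but it explicitly attributes the $d=2,3$ bounds to the fact that distance graphs in $\R^3$ have at most $n^{2-\varepsilon}$ edges (here the sharp $O(n^{3/2}\beta(n))$ unit-distance bound), which is exactly the geometric input you feed into the standard first-moment argument over $G(n,1/2)$. Bounding the probability that $G[S]$ is realizable by the probability that it has at most $Cs^{3/2}\beta(s)$ edges is the same mechanism the paper describes, so your argument matches the intended proof.
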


\begin{theorem}\label{T:oldd4d8} Let $ d \in \{4, \dots, 8\} $. We have

$$
R_{{\it D}}(s,s,d) \ge \frac {1}{e \cdot 2^{ \frac{2^{d-1}-1}{2^d} } }(1+o(1)) k 2^{\frac k2},\qquad \hbox{where}\,~ k = \left[c_d s\right] \text{ and}$$

$$
c_4 = 0.04413, \,\, c_5 = 0.01833, \,\, c_6 = 0.00806, \,\, c_7 = 0.00352, \,\, c_8 = 0.00165.
$$

\end{theorem}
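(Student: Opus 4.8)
The plan is to reduce the geometric problem to the classical diagonal Ramsey number $R(k,k)$ and then to feed in Spencer's probabilistic lower bound, which is already recorded in the introduction. The bridge is a purely structural statement: I would first establish that there is a constant $c_d$ (with the numerical values listed in the theorem) such that every distance graph in $\Real^d$ on $s$ vertices contains an independent set of size at least $k := [c_d s]$. Granting this, the reduction is immediate. If $G$ is a graph on $n$ vertices and $G[S]$ on some $s$-set $S$ is isomorphic to a distance graph in $\Real^d$, then $G[S]$, and hence $G$, contains an independent set of size $k$; dually, if $\bar G[S]$ is a distance graph then $\bar G$ contains such an independent set, i.e.\ $G$ contains a clique of size $k$. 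Therefore any graph $G$ with $\omega(G)<k$ and $\alpha(G)<k$ has no induced $s$-vertex distance subgraph in either $G$ or $\bar G$. Such a graph exists on $R(k,k)-1$ vertices, whence $R_{{\it D}}(s,s,d)\ge R(k,k)$ with $k=[c_d s]$.

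The second ingredient is classical: Spencer's Lovász Local Lemma argument gives $R(k,k)\ge \frac{\sqrt 2}{e}(1+o(1))\,k\,2^{k/2}$, so substituting $k=[c_d s]$ and absorbing the rounding into the $(1+o(1))$ factor already produces a bound of the advertised shape $k\,2^{k/2}$. To land on the precise constant $\frac{1}{e\,2^{(2^{d-1}-1)/2^d}}$ I would not quote Spencer as a black box but re-run the local-lemma optimization directly for the event ``$S$ induces a distance graph in $G$ or in $\bar G$,'' feeding in the structural lemma to estimate the probability of this event. The dyadic factor $2^{(2^{d-1}-1)/2^d}=2^{1/2-1/2^d}$ is exactly the sort of lower-order correction that emerges from such an optimization: it refines the bare $2^{-1/2}$ that produces Spencer's $\sqrt 2$, and it picks up its dimension dependence through the $d$-dependent bad-event probability supplied by the structural lemma.

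The heart of the matter, and the step I expect to be the main obstacle, is the structural lemma with the \emph{sharp} value of $c_d$. The qualitative fact is standard: a unit-distance graph in $\Real^d$ has clique number at most $d+1$, and a colouring of the space by colours avoiding the unit distance shows that any $s$ points contain at least a fixed fraction of points that are pairwise non-adjacent, the optimal such fraction being essentially a fractional (measurable) chromatic number of $\Real^d$. The entire difficulty lies in the constant: the crude ratio coming from an off-the-shelf colouring is far from optimal, and obtaining the specific values $c_4=0.04413,\dots,c_8=0.00165$ requires an optimized geometric partition of $\Real^d$ tailored to each dimension, in which the unit-distance relation is controlled across the cells of a decomposition whose dyadic nature is the ultimate source of the powers of two in the final constant. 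I would carry out this optimization dimension by dimension for $d\in\{4,\dots,8\}$, since the extremal configurations, and hence the best achievable $c_d$, genuinely depend on $d$.

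Finally I would assemble the pieces: combine the structural lemma, the local-lemma estimate, and the substitution $k=[c_d s]$, verifying that all error terms are of lower order so that they can be swept into the $(1+o(1))$ factor as $s\to\infty$. The only nontrivial bookkeeping is to confirm that the constant produced by the refined optimization coincides with $\frac{1}{e\,2^{(2^{d-1}-1)/2^d}}$ rather than merely Spencer's generic $\frac{\sqrt 2}{e}$.
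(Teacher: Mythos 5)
Your reduction step is logically sound: \emph{if} every $s$-vertex distance graph in $\R^d$ contained an independent set of size $k=[c_ds]$, then a graph on $R(k,k)-1$ vertices with no $k$-clique and no $k$-independent set would witness $R_{{\it D}}(s,s,d)\ge R(k,k)$, and Spencer's bound would finish the job. The genuine gap is that the entire content of the theorem sits in the structural lemma with the \emph{specific} values $c_4=0.04413,\dots,c_8=0.00165$, and you do not prove it --- you only remark that it requires ``an optimized geometric partition tailored to each dimension.'' Naming the lemma is not proving it. These constants are exactly the hard, dimension-specific geometric work: they come from explicit constructions of dense sets avoiding unit distance in $\R^d$, which is the subject of the cited paper [10] (``On dense sets avoiding unit distance in spaces of small dimension''), where this theorem is actually proved. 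The generic facts you invoke (clique number at most $d+1$, independence ratio at least $1/\chi(\R^d)$) fall far short of these values, as you yourself concede, so the proposal assumes precisely what has to be established. Everything you do prove (the reduction and the appeal to Spencer) is the routine part.

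There is also a concrete sign that your reading of the constant is backwards, and it points to a mismatch with the method the paper describes. Since $\frac{2^{d-1}-1}{2^d}=\frac12-\frac1{2^d}$, we have $2^{(2^{d-1}-1)/2^d}>2^{-1/2}$, so the stated constant $\frac{1}{e\cdot 2^{(2^{d-1}-1)/2^d}}$ is \emph{weaker} than Spencer's $\frac{\sqrt2}{e}=\frac{1}{e\cdot 2^{-1/2}}$: no re-running of the local-lemma optimization is needed to ``refine'' anything --- given your lemma, quoting Spencer as a black box would already yield a bound about twice as strong as the theorem claims. The fact that the actual theorem carries this weaker, dimension-dependent constant is the fingerprint of the method the paper attributes to the case $d\in\{4,\dots,8\}$: the geometric input there is \emph{not} a single independent set of ratio $c_d$, but several non-overlapping independent sets of large total cardinality, and the local lemma is run directly on configurations of many disjoint $k$-sets. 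Indeed, for a configuration of $r$ disjoint blocks of size $k$ (each forced to be independent, resp.\ complete) the local-lemma exponent is $\frac{E}{v-2}=\frac{r\binom k2}{rk-2}\approx\frac{k-1}{2}+\frac1r$, giving $n\approx\frac ke\,2^{\frac{k-1}{2}+\frac1r}$, which for $r=2^d$ is exactly $\frac{1}{e\cdot 2^{\frac12-\frac1{2^d}}}\,k\,2^{k/2}$ --- the theorem's constant. So the multi-set structure is not a lower-order correction that ``emerges from optimization''; it is the actual (and weaker) geometric hypothesis, whereas your proposed single-set lemma with the same constants is a stronger assumption than the known geometry supplies. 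In short: the key lemma is missing, and the plan for the constant misidentifies where it comes from.
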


Proofs of these theorems rely on some special properties of distance graphs in small dimensions. In cases  $d=2, 3$ the sharpest bound is based on the fact that the number of edges in a distance graph on $n$ vertices in $\R^2,\ \R^3$  does not exceed $n^{2-\varepsilon}$ for some $\varepsilon >0.$
However, distance graphs do not have this property in spaces ${\mathbb{R}}^d$, $d=4,\ldots,8$. For every $m\in \mathbb{N}$ we can realize a complete bipartite graph $K_{m,m}$ as the distance graph in $\R^4$. Indeed, consider two circles 

$$C_1=\{(x_1,x_2,0,0)\in \R^4:x_1^2+x_2^2=1/2\}$$ and 
$$C_2=\{(0,0,x_3,x_4)\in \R^4:x_3^2+x_4^2=1/2\}.$$ Then, by Pithagoras' theorem, the distance between any point of $C_1$ and any point of $C_2$ equals 1. Hence, we can embed one part of $K_{m,m}$ into $C_1$, and the second part into $C_2$. In cases $d=4,\ldots,8$ the proofs of the bounds are based on the following type of claims: every $n$-vertex distance graph in $\mathbb{R}^d$ contains several non-overlapping independent sets of sufficiently large (depending on $n$) total cardinality.

In this paper we describe a method that allows us to obtain much sharper bounds on distance Ramsey number $R_{{\it D}}(s,s,d)$ for every fixed $ d \ge 4 $. We state the bounds in the following theorem and in proposition \ref{upper}.

\begin{theorem}\label{T:main}

Let $ d \ge 4. $ 
The following inequality holds:

$$
R_{{\it D}}(s,s,d) \geq 2^{\left(\frac{1}{2[d/2]}-\bar o(1)\right)s}.
$$

\end{theorem}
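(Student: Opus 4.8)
The plan is to prove the bound by the probabilistic method, reducing the whole problem to the question of \emph{how many} graphs on $s$ vertices can occur as distance graphs in $\R^d$. Let $G$ be the binomial random graph on $n$ vertices with edge probability $1/2$. For a fixed $s$-element vertex set $S$, the induced subgraph $G[S]$ is a uniformly random labelled graph on $s$ vertices, so the probability that $G[S]$ is isomorphic to \emph{some} distance graph in $\R^d$ equals $L_s\,2^{-\binom{s}{2}}$, where $L_s$ denotes the number of labelled graphs on $s$ vertices that are distance graphs in $\R^d$. Since $G$ and $\bar G$ are identically distributed, a union bound over both colours and over all $S$ gives
$$
\ex\bigl[\#\{S:\,G[S]\ \text{or}\ \bar G[S]\ \text{is a distance graph}\}\bigr]\ \le\ 2\binom{n}{s}L_s\,2^{-\binom{s}{2}}.
$$
If this quantity is below $1$, then some graph on $n$ vertices has no induced $s$-vertex distance subgraph in either $G$ or $\bar G$, which yields $R_{{\it D}}(s,s,d)>n$. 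Everything therefore hinges on a good upper bound for $L_s$.

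\textbf{Key Lemma (to be proven): } $L_s \le 2^{\left(1-\frac{1}{[d/2]}+o(1)\right)\binom{s}{2}}$. First I would observe that a graph $F=([s],E)$ is a distance graph in $\R^d$ exactly when there is a point configuration $x_1,\dots,x_s\in\R^d$ in which every \emph{edge} of $F$ is realised as a unit distance (non-edges are unconstrained, since by the definition the edge set is merely a \emph{subset} of the unit-distance pairs). Consequently every distance graph $F$ satisfies $E(F)\subseteq U$, where $U$ is the set of unit-distance pairs of some realising configuration, and hence
$$
L_s\ \le\ \sum_{U}\,2^{|E(U)|}\ \le\ \bigl(\#\{\text{realisable }U\}\bigr)\cdot 2^{\max_U|E(U)|},
$$
the sum ranging over all exact unit-distance graphs $U$ of $s$-point configurations in $\R^d$.

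For the first factor I would invoke a Milnor--Thom/Warren-type bound on sign patterns: the pattern $U$ is determined by the zero set of the $\binom{s}{2}$ quadratic polynomials $|x_i-x_j|^2-1$ in the $sd$ coordinate variables, so the number of realisable $U$ is at most $s^{O(sd)}=2^{o(s^2)}$ for fixed $d$. For the second factor I would use the extremal fact that the maximum number of unit distances among $s$ points in $\R^d$, $d\ge4$, is $\left(1-\frac{1}{[d/2]}+o(1)\right)\binom{s}{2}$; the lower bound here is the Lenz construction of $[d/2]$ mutually orthogonal circles of radius $1/\sqrt2$, every cross-pair of which is at distance $1$, and it is precisely this construction that forces the exponent $1/[d/2]$. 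Multiplying the two factors gives the Key Lemma, the $2^{o(s^2)}$ term being absorbed into the $o(1)$ in the exponent.

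Finally I would substitute the Key Lemma into the first display: the expectation is at most $\tfrac{n^s}{s!}\,2^{-\left(\frac{1}{[d/2]}-o(1)\right)\binom{s}{2}}$, and taking $s$-th roots shows that this drops below $1$ as soon as $\log_2 n\le\bigl(\tfrac{1}{2[d/2]}-o(1)\bigr)s$, which is exactly the claimed bound. I expect the hard part to be the second factor of the Key Lemma: pinning down the constant $1-1/[d/2]$ requires the sharp upper bound on the number of unit distances in $\R^d$ (the Turán/Lenz asymptotics), whereas the sign-pattern count and the probabilistic reduction are routine. One must also verify that the $o(s^2)$ error from the sign-pattern term and the $o(n^2)$ error in the extremal bound are genuinely negligible against the main exponent, so that they contribute only to the $\bar o(1)$ in the final statement.
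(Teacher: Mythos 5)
Your proposal is correct, and it takes a genuinely different route from the paper. The paper never counts distance graphs directly: it first shows (Proposition 2 combined with Erd\H{o}s' hypergraph Tur\'an theorem) that every $s$-vertex distance graph in $\R^d$ has at most $s^{[d/2]+1-\eps}$ cliques on $[d/2]+1$ vertices, and then runs a fairly elaborate probabilistic argument --- R\"odl's packing theorem, averaging over random permutations of the vertex set, and the Erd\H{o}s--Kleitman--Rothschild enumeration of $K_l$-free graphs, with a double limit (first an auxiliary clique size $k\to\infty$, then $s\to\infty$) --- to show that in $G(n,1/2)$ with $n=2^{(1/(2[d/2])-\gamma)s}$, with positive probability every $s$-set spans more than $s^{[d/2]+1-\eps}$ such cliques in both $G$ and $\bar G$. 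You instead bound the number $L_s$ of labelled $s$-vertex distance graphs directly, as (number of realizable unit-distance patterns) times $2^{\max|E(U)|}$, using a zero-pattern/sign-pattern count from real algebraic geometry for the first factor and the extremal unit-distance asymptotics in $\R^d$, $d\ge4$, for the second, after which a one-line first-moment computation finishes. The geometric core is the same in both proofs: the exponent $1/[d/2]$ comes from the Lenz construction together with the fact that unit-distance graphs in $\R^d$ contain no $K_{3,\dots,3}$ with $[d/2]+1$ parts --- your extremal upper bound is exactly this fact (the paper's Proposition 2) fed into Erd\H{o}s--Stone, while the paper feeds the same fact into Erd\H{o}s' hypergraph theorem. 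Your route is shorter, avoids all the permutation/packing machinery, and yields a reasonably explicit $\bar o(1)$ (the pattern count costs only $O(d\log s/s)$ in the exponent, plus the Erd\H{o}s--Stone error term), whereas the paper explicitly concedes that its method makes the $\bar o(1)$ hard to write down; in exchange, the paper's route is purely combinatorial and produces the clique-count bound (Theorem 5), which has independent interest. One detail you must state explicitly: the realizing configuration consists of pairwise \emph{distinct} points --- this is forced by the definition $V(G)\subseteq\R^d$ --- and the extremal bound $\left(1-\frac{1}{[d/2]}+o(1)\right)\binom{s}{2}$ is valid only for distinct points. If coincident points were allowed, a blown-up unit simplex would realize patterns with about $\left(1-\frac{1}{d+1}\right)\binom{s}{2}$ edges, and your Key Lemma, hence your final exponent, would degrade to roughly $\frac{1}{2(d+1)}$; with injectivity noted, the argument is sound.
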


Theorem \ref{T:main} significantly strengthens the bounds from Theorem \ref{T:oldd4d8}. Moreover, Theorem \ref{T:main} gives essentially the same bounds
for $ d \in\{ 2,3\} $ as Theorems \ref{T:oldd2} and \ref{T:oldd3} do, though these theorems provide an explicit formula for the $\bar o(1)$
factor in the exponent. As we will see from the proof in general it is difficult to express this factor explicitly using the new method.

For a graph $G$ let $Cl(G,r)$ denote the number of $r$-cliques in $G,$ and put $cl(G,r)=|Cl(G,r)|$. To prove Theorem \ref{T:main} we need the following theorem.

\begin{theorem}\label{T:klika} For any fixed natural $d$ there exists $\eps > 0 $ and there exists $ n_0 \in {\mathbb N} $ such
that for every distance graph $G$ in $\R^d$ with $n \ge n_0$ vertices
$$
cl\left(G,{\left[\frac{d}{2}\right]+1}\right) \leq n^{\left[\frac{d}{2}\right]+1-\eps}.
$$
\end{theorem}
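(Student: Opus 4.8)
The plan is to convert the clique count into a Tur\'an-type estimate for uniform hypergraphs and to isolate a single geometric obstruction that is independent of $n$. Write $k=[d/2]$ and assume $d\ge 2$, so that $k\ge 1$. A $(k+1)$-clique of a distance graph $G$ is exactly a set of $k+1$ distinct points of $V(G)\subseteq\R^d$ that are pairwise at distance $1$. Form the $(k+1)$-uniform hypergraph $H$ on vertex set $V(G)$ whose edges are these cliques, so that $cl(G,k+1)=|E(H)|$. By Erd\H{o}s' theorem on the Tur\'an numbers of complete multipartite uniform hypergraphs, if $H$ contains no copy of the complete $(k+1)$-partite hypergraph $K^{(k+1)}_{t,\dots,t}$, then $|E(H)|\le C\,n^{(k+1)-1/t^{k}}$ for some constant $C=C(d,t)$. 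Hence it will suffice to produce a fixed $t=t(d)$ for which $H$ is $K^{(k+1)}_{t,\dots,t}$-free: then any $\eps<1/t^{k}$ works once $n_0$ is taken large enough to absorb $C$.

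First I would reformulate the forbidden configuration geometrically. A copy of $K^{(k+1)}_{t,\dots,t}$ in $H$ is a family of pairwise disjoint sets $X_0,\dots,X_k\subseteq\R^d$, each of size $t$, such that all $t^{k+1}$ transversals are edges; equivalently, any two points lying in \emph{different} parts are at distance $1$, while distances \emph{within} a part are unconstrained (a transversal never meets a part twice). The crucial step is to show that the affine direction spaces $V_i:=\mathrm{dir}(\aff X_i)$ are then pairwise orthogonal. Indeed, for $u,u'\in X_i$ and $w,w'\in X_j$ with $i\ne j$, subtracting $|u'-w|^2=1$ from $|u-w|^2=1$ gives $w\cdot(u-u')=\tfrac12(|u|^2-|u'|^2)$, which is independent of the choice of $w\in X_j$; hence $(w-w')\cdot(u-u')=0$, and letting $u,u',w,w'$ vary yields $V_i\perp V_j$. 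Pairwise orthogonality forces $\sum_{i=0}^{k}\dim V_i\le d\le 2k+1$.

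It then remains to run a dimension count. Since $2(k+1)=2k+2>2k+1\ge d$, the $k+1$ nonnegative integers $\dim V_i$ cannot all be at least $2$; so some part $X_{i_0}$ has $\dim V_{i_0}\le 1$, i.e. $\aff X_{i_0}$ is a point or a line. Picking any $w\in X_j$ with $j\ne i_0$ (possible since $k+1\ge 2$), every point of $X_{i_0}$ lies on the unit sphere about $w$, and a line meets that sphere in at most two points; therefore $|X_{i_0}|\le 2$. Consequently $t=3$ already rules out $K^{(k+1)}_{3,\dots,3}$, and the scheme above yields the theorem with any $\eps<1/3^{k}=1/3^{[d/2]}$.

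The hypergraph Tur\'an estimate and the final choice of $n_0$ absorbing the constant $C$ are routine. I expect the delicate part to be the geometric lemma of the last two paragraphs: making the orthogonality computation clean while correctly handling degenerate parts (where $\dim V_i=0$, or where $\aff X_i$ fails to have full dimension inside the unit sphere it must lie on), and checking that a part with $\dim V_{i_0}=1$ genuinely reduces to the ``a line meets a sphere in at most two points'' bound. This route also explains the shape of the final result: the defect $\eps$ emerges only as something of order $3^{-[d/2]}$ with no clean closed form, which is why the exponent in Theorem~\ref{T:main} must be written with an unspecified $\bar o(1)$ rather than an explicit constant.
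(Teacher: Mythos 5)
Your proposal is correct, and its skeleton coincides with the paper's: both pass to the $([d/2]+1)$-uniform hypergraph whose edges are the $([d/2]+1)$-cliques, invoke Erd\H{o}s' bound $f(n;K^{(r)}(3,\dots,3))\le n^{r-1/3^{r-1}}$ with $r=[d/2]+1$, and reduce everything to the single geometric fact that a distance graph in $\R^d$ contains no $K_{3,\dots,3}$ with $[d/2]+1$ parts (your hypergraph-level statement that $H$ is $K^{(k+1)}_{3,\dots,3}$-free is equivalent to this graph statement, exactly as in the paper's Corollary \ref{Cor:Erd}). Where you genuinely diverge is the proof of that geometric lemma. The paper proves it by induction on $d$ in steps of two: three vertices of one part force all remaining parts into the codimension-2 affine subspace orthogonal to their plane through the circumcenter of the triangle, with the base case $d\in\{2,3\}$ being the line-meets-sphere observation. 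You instead give a one-shot, non-inductive argument: the polarization identity $w\cdot(u-u')=\tfrac12(|u|^2-|u'|^2)$ shows the direction spaces $V_i$ of the parts are pairwise orthogonal, hence $\sum_i \dim V_i\le d\le 2[d/2]+1$, so some part has affine dimension at most $1$ and is killed by the line-sphere bound. Both proofs exploit the same orthogonality phenomenon, but yours makes the dimension count global and explicit, and it handles the degenerate configuration of a collinear triple uniformly, whereas the paper's circumcenter construction tacitly assumes the three chosen vertices are affinely independent (the collinear case needs a separate, easy remark there). Your restriction to $d\ge 2$ is sensible and costs nothing, since the statement for $d=1$ would read $cl(G,1)\le n^{1-\eps}$, which fails, and the paper only applies the theorem for $d\ge 4$; your approach even yields the explicit value $\eps=1/3^{[d/2]}$, which is in fact a clean closed form, so your closing remark undersells it slightly.
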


This theorem allows us to generalize the method used to obtain bounds in Theorems \ref{T:oldd2} and \ref{T:oldd3}.
We prove this theorem in Section \ref{sec2}. In Section 3 we present the proof of Theorem 4. Finally, in Section 4 we prove

\begin{prop}\label{upper} For any $1\le d\le s$ we have
$$
R_{{\it D}}(s,s,d) \leq 2 \left[\frac{d}{2}\right] R\left(\left \lceil \frac{s}{[d/2]} \right \rceil, \left \lceil \frac{s}{[d/2]} \right \rceil\right) \le
4^{\frac{s}{[d/2]}(1+o(1))}.
$$

\end{prop}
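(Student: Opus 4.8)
The plan is to produce, inside $G$ or inside $\bar G$, an induced complete multipartite graph with $k:=[d/2]$ parts each of size $m:=\lceil s/k\rceil$, and to observe that such a graph is a distance graph in $\R^{d}$. For the realizability I would generalize the $K_{m,m}\subset\R^4$ construction from the introduction: take $k$ pairwise orthogonal coordinate $2$-planes inside $\R^{2k}\subseteq\R^{d}$ and, in the $i$-th plane, a circle of radius $1/\sqrt2$. Any two points lying on \emph{different} circles are at distance $\sqrt{\tfrac12+\tfrac12}=1$, while on a single circle the chord length equals $1$ only for points separated by the angle $\pi/2$; choosing the $m$ points of each part at generic angles, no such separation occurs, so each part is an independent set and every cross pair is an edge. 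This realizes the complete $k$-partite graph $K_{m,\dots,m}$, and deleting vertices keeps it complete multipartite (with $\le k$ parts), so for any $s\le km$ we obtain an induced distance graph on exactly $s$ vertices. Hence it suffices to find an induced $K_{m,\dots,m}$ in $G$ (which lands in $G$) or an induced disjoint union $k\cdot K_m$ of $k$ cliques in $G$ (which is an induced $K_{m,\dots,m}$ in $\bar G$, since $k\cdot K_m$ is exactly the complement of $K_{m,\dots,m}$).

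For the combinatorial core I would partition the $n=2k\,R(m,m)$ vertices of $G$ into $2k$ blocks of $R(m,m)$ vertices each. By the definition of the Ramsey number, each block contains a homogeneous $m$-set, namely a clique or an independent set of $G$. Since there are $2k$ blocks and only two types, by the pigeonhole principle at least $k$ of these sets share a type; after possibly passing to $\bar G$ we may assume we have $k$ pairwise disjoint independent $m$-sets $I_1,\dots,I_k$. If these parts are pairwise completely joined we get the desired induced $K_{m,\dots,m}$, and symmetrically $k$ pairwise non-adjacent cliques give $k\cdot K_m$. For the stated numerical bound I would then insert the estimate $R(m,m)\le 4^{m}$ quoted above together with $m=\lceil s/k\rceil\le s/k+1$; since $k=[d/2]$ is a fixed constant, the factor $2k\cdot 4^{O(1)}$ is absorbed into the exponent, giving $n\le 4^{\frac{s}{[d/2]}(1+o(1))}$.

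I expect the main obstacle to be precisely the \emph{between-block adjacencies}. The pigeonhole step controls only the internal structure of the $k$ sets, whereas the bipartite graphs between $I_a$ and $I_b$ are a priori arbitrary, so $\bigcup_a I_a$ is only guaranteed to be \emph{some} $k$-colorable graph rather than the complete multipartite graph we need — and a general $k$-colorable graph need not be a distance graph in $\R^{d}$, because the orthogonal-circle construction forces all cross pairs to be at distance exactly $1$. The delicate point of the argument is therefore to arrange that the selected $m$-sets are pairwise homogeneously related (all cross pairs present, or all cross pairs absent) while spending only the linear factor $2k$ in front of $R(m,m)$; this is what forces the homogeneous sets to be chosen not merely blockwise but so that their mutual adjacency is uniform, and it is where the bulk of the work must go.
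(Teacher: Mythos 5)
Your combinatorial core --- splitting $2k\,R(m,m)$ vertices into $2k$ blocks, applying the Ramsey property inside each block, and using pigeonhole to extract $k$ disjoint homogeneous $m$-sets of the same type --- is exactly the paper's argument. But the ``main obstacle'' you then raise, and explicitly leave unresolved (``where the bulk of the work must go''), is not an obstacle at all: it comes from misreading Definition 1. A distance graph only needs $E(G) \subseteq \{(x,y): |x-y|=1\}$; it is \emph{not} required to contain every unit-distance pair as an edge. Consequently an \emph{arbitrary} $k$-partite graph, with whatever bipartite graphs between its parts, is a distance graph in $\R^{2k}\subseteq\R^d$: embed the $i$-th part injectively into the circle of radius $1/\sqrt{2}$ in the $i$-th coordinate $2$-plane; every actual edge joins two different parts and hence has length exactly $1$, while cross pairs that are not edges do no harm even though they also lie at unit distance. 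With this observation your proof is already complete at the pigeonhole step: the subgraph of $G$ induced on $I_1\cup\dots\cup I_k$ is $k$-partite (each $I_a$ is independent in $G$), hence a distance graph, and so is any induced subgraph of it on exactly $s\le km$ vertices; symmetrically for $\bar G$ when the majority type is cliques. No control of the between-part adjacencies is needed, and your ``generic angles'' precaution is likewise unnecessary.

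Worse, the repair you propose --- forcing the parts to be pairwise completely joined, i.e.\ finding an induced $K_{m,\dots,m}$ in $G$ or in $\bar G$ --- cannot succeed at this quantitative level, so it is fortunate that it is superfluous. An induced copy of $K_{m,\dots,m}$ prescribes all $\binom{s}{2}$ pairs among its $s=km$ vertices (edges across parts, non-edges inside parts), so in $G(n,1/2)$ the expected number of induced copies in $G$ or in $\bar G$ is at most $2n^{s}2^{-\binom{s}{2}}$, which is below $1$ unless $n\gtrsim 2^{s/2}$. Your target count $n=2k\,R(m,m)\le 2^{\frac{2s}{k}(1+o(1))}$ lies far below that threshold once $k=[d/2]\ge 5$, so the lemma you would need is simply false there; for $k\in\{2,3,4\}$ it is not ruled out by this computation but nothing in your proposal establishes it. The entire point of the proposition is that, thanks to the liberal definition of distance graph, one only has to control the roughly $k\binom{m}{2}\approx \frac{s^2}{2k}$ within-part pairs rather than all $\binom{s}{2}$ pairs --- precisely the savings that produce the exponent $\frac{s}{[d/2]}$ instead of $s$.
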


The proposition significantly strengthens the described above  trivial upper bound.
Moreover, the estimate for $R_{D}(s,s,d),$ which is given in Theorem \ref{T:main} and Proposition \ref{upper} turns out to be
essentially the same as for the classical Ramsey number $R\left(\left \lceil \frac{s}{[d/2]} \right \rceil, \left \lceil \frac{s}{[d/2]} \right \rceil\right)$:

\begin{equation*}
\frac {s}{2[d/2]}(1+\bar o(1))\le \log R_{{\it D}}(s,s,d) \le \frac {2s}{[d/2]}(1+\bar o(1)).
\end{equation*}

Therefore, in some sense we solve the problem completely for fixed $d$.

\end{section}

\begin{section}{Proof of Theorem \ref{T:klika}}\label{sec2}\end{section}

We use $K_{l_1,\dots,l_r}$ to denote a complete $r$-partite graph which parts have cardinalities $l_1$,$\dots$,$l_r$.

Theorem \ref{T:klika} follows from Proposition \ref{C:K333} and Corollary \ref{Cor:Erd} of Theorem \ref{T:Erd}. Let us begin with the proposition.

\begin{prop}\label{C:K333}

 If $G$ is a distance graph in $\Real^d$, then $G$ does not contain a subgraph isomorphic to $K_{\underbrace{3,\dots,3}_{\left[\frac{d}{2}\right]+1}}.$

\end{prop}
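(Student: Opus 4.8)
The plan is to show that the complete multipartite graph $K_{3,\dots,3}$ with $[d/2]+1$ parts of size $3$ cannot be realized as a distance graph in $\R^d$. I would argue by a dimension count. Suppose for contradiction that such an embedding exists, and let the parts be $A_1,\dots,A_r$ with $r=[d/2]+1$, where each $A_i=\{a_i^1,a_i^2,a_i^3\}$ is a set of three points in $\R^d$ lying at unit distance from every point of every other part $A_j$, $j\neq i$.

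First I would analyze the geometric constraint imposed on a single part by a single point outside it. If a point $x$ is at unit distance from all three points of $A_i$, then $A_i$ lies on a sphere centered at $x$; equivalently, each such $x$ lies in the set of points equidistant (at distance $1$) from the three points $a_i^1,a_i^2,a_i^3$. The key step is to understand the affine span of each triple. Generically three points span a $2$-dimensional affine plane, and the locus of points at a fixed distance from all three of them is an affine subspace of dimension $d-2$ (the intersection of the perpendicular bisector-type conditions), shifted appropriately. The crucial observation is that \emph{all} the points of the other $r-1$ parts must lie in this $(d-2)$-dimensional locus associated to $A_i$.

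The core of the argument is then to intersect these constraints. Each part $A_i$ forces the remaining $3(r-1)$ points into an affine subspace whose dimension drops by $2$ relative to the ambient space (because $\dim\aff(A_i)=2$ when the three points are affinely independent). Carrying this out for all $r$ parts simultaneously, I would track how the available dimension shrinks: after accounting for the affine spans of all $r$ triples, the total points would be forced to lie in $\R^d$ while simultaneously spanning affine dimension at least $2r=2([d/2]+1)\geq d+2>d$, which is the contradiction. More carefully, I would set up the argument so that the union of the spans of the parts, together with the perpendicularity/equidistance conditions linking them, exceeds the ambient dimension $d$. The cleanest route is to show each triple must be affinely independent (span a plane), place one representative point from each part to build up dimension, and derive that $2r-1$ or $2r$ independent directions are needed, contradicting $r=[d/2]+1$.

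The main obstacle, and the step requiring the most care, is handling the degenerate cases where a triple $A_i$ is \emph{not} affinely independent --- i.e., its three points are collinear or coincident. Coincidence is ruled out because distinct vertices of a graph must be distinct points, but collinearity of a triple reduces the dimension drop from $2$ to $1$ and could a priori weaken the count. I would need to argue that three distinct points all at unit distance from a common external point cannot be collinear (a line meets a sphere in at most two points), which forces every triple to be affinely independent and hence genuinely $2$-dimensional. This is precisely why the parts have size $3$ rather than $2$: the third point defeats the collinear degeneracy and guarantees the full dimension drop, making the dimension count go through and yielding the bound $r\leq[d/2]$, contradicting $r=[d/2]+1$.
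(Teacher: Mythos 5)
Your overall strategy --- non-collinearity of each triple via the line--sphere argument, plus the fact that all points of the other parts lie in the codimension-$2$ affine subspace $L_i$ orthogonal to $\aff\langle A_i\rangle$ through its circumcenter --- is sound, and when completed it is essentially an unrolled version of the paper's proof: the paper runs an induction on $d$ in steps of $2$, peeling off one part at a time and descending into exactly this subspace $L_i\cong\R^{d-2}$, with your sphere--line observation serving as the base case $d\in\{2,3\}$. So the geometric ingredients are identical; what differs is only the bookkeeping, and that is where your sketch has a genuine gap.

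The gap is the counting step itself. You assert that the union of the parts must span dimension at least $2r$, but the mechanism you offer --- ``place one representative point from each part to build up dimension'' --- cannot deliver this: $r$ points span an affine subspace of dimension at most $r-1$. What actually makes the contributions add is a pairwise orthogonality statement that your sketch never formulates: for $i\neq j$, since $A_j\subset L_i$ we get $\aff\langle A_j\rangle\subseteq L_i$, so the $2$-dimensional direction space $U_j$ of $\aff\langle A_j\rangle$ is contained in the direction space of $L_i$, which is the orthogonal complement of $U_i$. Hence $U_1,\dots,U_r$ are pairwise orthogonal $2$-dimensional subspaces of $\R^d$, giving $2r\le d$, i.e.\ $r\le\left[\frac{d}{2}\right]$, the desired contradiction. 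Moreover your hedge ``$2r-1$ or $2r$ independent directions'' is not harmless: for odd $d$ one has $2r-1=d$, which contradicts nothing (similarly, your inequality $2([d/2]+1)\ge d+2$ fails for odd $d$, where $2r=d+1$). The argument only closes if you obtain the full $2r$, and the pairwise orthogonality of the part-planes is exactly the missing ingredient that guarantees it.
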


\begin{proof}
The proof uses induction on $ d $.

 First, we verify the proposition for $d\in\{2,3\}$. Suppose that the distance graph $G$ in $\Real^3$ has a subgraph, isomorphic to $K_{3,3}$. Consider three vertices $v_1$, $v_2$, $v_3$ from the first part.
The other vertices of the subgraph lie on the line $l$, that is orthogonal to plane $\aff \langle v_1,v_2,v_3 \rangle $ and passes through a circumcenter of the triangle with vertices $v_1$, $v_2$, $v_3$. But  the  line $l$ contains at most two points that lie at unit distance apart from $v_1$, $v_2$, $v_3$. Thus, the statement is true for $d\in\{2,3\}$.

Assume that the proposition holds for $d$. Consider a distance graph $G
\subset \R^{d+2}$. Suppose that it has a subgraph isomorphic to $K_{3,\dots,3}$ with $\left[\frac{d}{2}\right]+2$ parts. Again consider vertices $v_1$, $v_2$, $v_3$
from the first part. All other vertices of the subgraph lie in the hyperplane that is orthogonal to plane $\aff \langle v_1,v_2,v_3 \rangle $ and
passes through a circumcenter of the triangle  $v_1v_2v_3$. However, by the induction hypothesis there are no subgraphs in $d$-dimensional
space isomorphic to $K_{3,\dots,3}$ with $\left[\frac{d}{2}\right]+1$ parts. This contradiction concludes the proof.
\end{proof}

Next we state Theorem 6, which is proven in \cite{Erd}. We introduce some notation from \cite{Erd}.
Let $K^{(r)}(l_1,\dots,l_r)$ be a complete
$r$-partite $r$-uniform hypergraph which parts have cardinalities $l_1$, \dots, $l_r$ (every edge has exactly one vertex from every partite set), and let
$f\left(n; K^{(r)}(l_1,\dots,l_r)\right)$ be the least natural number such that every $ r $-uniform hypergraph with $ n $ vertices and
$$f\left(n; K^{(r)}(l_1,\dots,l_r)\right)$$ edges has a subhypergraph isomorphic to $K^{(r)}(l_1,\dots,l_r)$.

\begin{theorem}\label{therd} (Erd\H{o}s, \cite[Theorem 1]{Erd}.\label{T:Erd})
Let $n > n_0(r, l)$, $l > 1$. For sufficiently large $C$ ($C$ does not depend on $n,r,l$) the following inequality holds:

 $$f\left(n;K^{(r)}(l,\dots,l)\right) \leq n^{r-\dfrac{1}{l^{r-1}}}. $$

\end{theorem}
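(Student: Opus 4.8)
The plan is to prove the bound by induction on the uniformity $r$ (with $l$ fixed), generalizing the K\H{o}v\'ari--S\'os--Tur\'an argument for ordinary graphs, which is exactly the case $r=2$. Throughout I would establish the quantitative form $f\left(n;K^{(r)}(l,\dots,l)\right) \le C\, n^{\,r-1/l^{r-1}}$ for a suitable constant $C=C(r,l)$, which is what the displayed inequality must mean once one reads in the ``sufficiently large $C$''. The base case $r=1$ is immediate: $K^{(1)}(l)$ is just an $l$-element set of singleton edges, so $f\left(n;K^{(1)}(l)\right)=l$, which is at most $C\,n^{0}$ as soon as $C\ge l$.

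For the inductive step I assume the estimate for uniformity $r-1$ and let $H$ be an $r$-uniform hypergraph on $n$ vertices with $e=|E(H)|$ edges. For an $l$-element set $S\subseteq V(H)$ I introduce the \emph{link} $H_S$, the $(r-1)$-uniform hypergraph on $V(H)\setminus S$ whose edges are the $(r-1)$-sets $T$ satisfying $T\cup\{s\}\in E(H)$ for every $s\in S$. The crucial observation is that a copy of $K^{(r-1)}(l,\dots,l)$ sitting inside some $H_S$ yields, upon adjoining $S$ as the $r$-th part, a copy of $K^{(r)}(l,\dots,l)$ in $H$; and since this copy lives on $V(H)\setminus S$ its parts are automatically disjoint from $S$, so the assembled $r$-partite object is genuine. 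Thus it suffices to exhibit a single $S$ whose link is dense enough to feed into the induction hypothesis.

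The main computation is a lower bound on $\sum_S |E(H_S)|$. Reversing the order of summation and writing $N(T)=\{v:T\cup\{v\}\in E(H)\}$ for each $(r-1)$-set $T$, one gets $\sum_S |E(H_S)| = \sum_T \binom{|N(T)|}{l}$, because $T$ is an edge of $H_S$ precisely when $S\subseteq N(T)$. Since every edge of $H$ contributes $r$ to $\sum_T |N(T)|$, we have $\sum_T |N(T)| = r\,e$, and the convexity of $x\mapsto\binom{x}{l}$ (Jensen) gives $\sum_T \binom{|N(T)|}{l} \ge \binom{n}{r-1}\binom{\overline{N}}{l}$ with $\overline{N}=r\,e/\binom{n}{r-1}$. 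Substituting $e\approx C\,n^{\,r-1/l^{r-1}}$ yields $\overline{N}=\Theta\left(n^{\,1-1/l^{r-1}}\right)$, hence $\sum_S |E(H_S)| = \Theta\left(n^{\,r-1+l-1/l^{r-2}}\right)$; dividing by the number $\binom{n}{l}=\Theta(n^{l})$ of sets $S$, the average link carries $\Theta\left(n^{\,(r-1)-1/l^{r-2}}\right)$ edges, which is exactly the induction threshold $n^{\,(r-1)-1/l^{(r-1)-1}}$.

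The main obstacle, and essentially the only subtlety, is precisely this exponent matching combined with the chase of constants: the averaging hands us a link sitting \emph{at} the induction threshold, so I must take $C$ large (depending on $r,l$, and using $n>n_0(r,l)$ to absorb the approximations $\binom{n}{r-1}\sim n^{r-1}/(r-1)!$ and $\binom{\overline{N}}{l}\sim \overline{N}^{\,l}/l!$, and to justify Jensen for the non-integer $\overline{N}$) so that some $H_S$ \emph{strictly} exceeds the constant $C_{r-1}$ required to apply the inductive bound. The only other bookkeeping is the harmless replacement of $n$ by $n-l$ when invoking the hypothesis on $H_S\subseteq V(H)\setminus S$, again absorbed by the choice of $C$ and $n_0$.
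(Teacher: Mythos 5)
This theorem is not proved in the paper at all --- it is quoted verbatim from Erd\H{o}s \cite[Theorem 1]{Erd}, so there is no in-paper argument to compare against. Your proof is correct and is, in essence, Erd\H{o}s's original argument: induction on the uniformity $r$, counting pairs $(S,T)$ via $\sum_T \binom{|N(T)|}{l}$, applying convexity to find one $l$-set $S$ whose link is an $(r-1)$-uniform hypergraph at the inductive density threshold $n^{(r-1)-1/l^{r-2}}$, and adjoining $S$ as the $r$-th part (the case $r=2$ being the K\H{o}v\'ari--S\'os--Tur\'an bound). The only discrepancy is that you obtain the bound with a multiplicative constant $C(r,l)$ rather than the constant-free form $n^{r-1/l^{r-1}}$ of Erd\H{o}s's statement; this is immaterial here, since the paper only uses the theorem through Corollary \ref{Cor:Erd}, which merely needs $cl(G,r)\le n^{r-\eps}$ for some $\eps>0$, and any fixed constant is absorbed by taking $\eps$ slightly smaller and $n_0$ larger.
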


\begin{corollary} \label{Cor:Erd} For given $ l $ and $ r $ there exists $\eps > 0$ and $ n_0 \in {\mathbb N} $
such that if $ n \ge n_0 $ and $n$-vertex graph $G$ does not have a subgraph isomorphic to
$K_{\underbrace{l,\dots,l}_{r}},$ then $$cl(G,r) \leq n^{r-\eps}.$$
\end{corollary}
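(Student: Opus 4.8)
The plan is to reduce the graph statement to the hypergraph statement of Theorem \ref{T:Erd}. First I would associate to $G$ an auxiliary $r$-uniform hypergraph $H$ on the same vertex set $V(G)$, declaring an $r$-element set to be an edge of $H$ exactly when it induces an $r$-clique in $G$. By construction $H$ is $r$-uniform and $|E(H)| = cl(G,r)$, so bounding $cl(G,r)$ amounts to bounding the number of edges of $H$.

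The heart of the argument is a dictionary between the two kinds of ``complete multipartite'' objects: a copy of $K^{(r)}(l,\dots,l)$ in $H$ produces a copy of $K_{l,\dots,l}$ in $G$. Suppose $A_1,\dots,A_r$ are pairwise disjoint $l$-element subsets of $V(G)$ such that every transversal --- one vertex chosen from each $A_i$ --- is an edge of $H$. To see that these parts span $K_{l,\dots,l}$ in $G$, I would fix any two vertices $u\in A_i$ and $v\in A_j$ with $i\ne j$ and complete $\{u,v\}$ to a full transversal by selecting arbitrary vertices from the remaining parts; this transversal is an edge of $H$, hence an $r$-clique of $G$, so $u$ and $v$ are adjacent in $G$. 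Letting $u,v$ range over all cross-pairs shows that all edges between distinct parts are present, which is precisely a copy of $K_{l,\dots,l}$.

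With the dictionary established I would argue by contraposition. If $G$ contains no $K_{l,\dots,l}$, then $H$ contains no $K^{(r)}(l,\dots,l)$; by the very definition of $f(n;K^{(r)}(l,\dots,l))$ this forces $|E(H)| < f(n;K^{(r)}(l,\dots,l))$. For $n > n_0(r,l)$ Theorem \ref{T:Erd} gives $f(n;K^{(r)}(l,\dots,l)) \le n^{r-1/l^{r-1}}$, whence $cl(G,r) = |E(H)| < n^{r-1/l^{r-1}}$. Choosing $\eps = 1/l^{r-1} > 0$ and $n_0 = n_0(r,l)$ yields the claim. (Theorem \ref{T:Erd} requires $l>1$, but the case $l=1$ is trivial, since $K_{1,\dots,1}=K_r$ and a graph with no $K_r$ has $cl(G,r)=0$.)

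I do not expect a serious obstacle; the only step demanding care is the dictionary, where one must verify that \emph{every} cross-edge is forced rather than merely the edges lying inside a single clique. Ranging over all transversals through a fixed pair is exactly what delivers this, and it is the one place where the full $r$-partite transversal structure of $K^{(r)}(l,\dots,l)$ is used.
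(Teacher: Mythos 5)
Your proof is correct and follows essentially the same route as the paper: form the $r$-uniform hypergraph whose edges are the $r$-cliques of $G$, note that a copy of $K^{(r)}(l,\dots,l)$ there forces a copy of $K_{l,\dots,l}$ in $G$, and conclude via the definition of $f\left(n;K^{(r)}(l,\dots,l)\right)$ and Theorem \ref{T:Erd}. The only differences are that you spell out the transversal argument and the explicit value $\eps = 1/l^{r-1}$, which the paper leaves implicit.
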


\begin{proof}

Indeed, consider a graph $G$ that does not contain a subgraph isomorphic to
$K_{\underbrace{l,\dots,l}_{r}}$. Construct a hypergraph $\widetilde{G}=(V,\widetilde{E})$ with the vertex set that is the
same as the vertex set of $G$ and with the edge set consisting of all the $r$-cliques of the graph $G$. Let  $|\widetilde{E}|=m$ and  suppose $m \geq f(n;K^{(r)}(l,\dots,l))$. Note that $m=cl(G,r)$. According to the definition, hypergraph $\widetilde{G}$ has a subhypergraph isomorphic to $K^{(r)}(l,\dots,l)$. Thus $G$ has a subgraph isomorphic to $K_{\underbrace{l,\dots,l}_{r}}$, which contradicts the assumption.

Hence $m < f(n;K^{(r)}(l,\dots,l))$.
By Theorem \ref{T:Erd} there exits $ \eps > 0$, $\eps=\eps(l,r)$, such that $m < n^{r-\eps}$.

\end{proof}

\begin{proof} [Proof of Theorem \ref{T:klika}]
Let $G$ be a distance graph in $\R^d$. By Proposition \ref{C:K333} $G$ does not contain $K_{\underbrace{3,\dots,3}_{\left[\frac{d}{2}\right]+1}}$. We apply
Corollary \ref{Cor:Erd}  with $r=[d/2]+1$ and $l=3$  to $G$ and get the statement of Theorem \ref{T:klika}.
\end{proof}

\begin{section}{Proof of Theorem \ref{T:main}}\label{sec4}\end{section}
\subsection{How to obtain lower bounds on $R_{{\it D}}(s,s,d)$}\label{s3.1}

To obtain a lower bound $R_{{\it D}}(s,s,d)>n$ for the distance Ramsey number  we need to prove that there exists such a graph $G$ on $n$ vertices that every induced $s$-vertex subgraph of $G$ and every induced $s$-vertex subgraph of $\bar{G}$
is not isomorphic to a distance graph in $\mathbb{R}^d$.

Let $k=[d/2]+1$, and let $ \eps = \eps(d) $ be the number from Theorem 5. Theorem 5 states that every graph $ H $ in $ \R^d $ on  $ s $ vertices has at most $ s^{k-\eps} $ $ k $-cliques. We will prove that for a specific natural $n$ there exists an $n$-vertex graph $G$ such that every induced $s$-vertex subgraph of $G$ and every induced $s$-vertex subgraph of its complement $\bar{G}$ contains more than $ s^{k-\varepsilon}$ cliques of size $k$.  In this case the inequality $R_{{\it D}}(s,s,d)>n$ takes place. The value $s$ is supposed to be sufficiently large (see Theorem 5 and Theorem 4).

We use probabilistic method (see, e.g., \cite{AS}).
For every natural $n$ consider the classical Erd\H{o}s -- R\'enyi random graph model $G\left(n,1/2\right)$ (see, e.g., \cite{AS}, \cite{Boll}).

For every subset $S$, $ |S| = s $, of the vertex set $V_n$ of a random graph $G \sim G\left(n,1/2\right)$ we define the event $A_S$: the graph $G[S]$ has at most $s^{k-\varepsilon}$ cliques of size $k$. We use $A'_S$ to denote the event that the graph
$\bar{G}[S]$ has at most $s^{k-\varepsilon}$ cliques of size $k$.

If we prove that for a certain $n$ there is a positive probability that none of the events  $ A_S, A_S' $ occur, i.e.
$$
P\left(\overline{\bigcup_{S \subset V_n} (A_S \cup A'_S)}\right) > 0,
$$
then we obtain the bound $R_{{\it D}}(s,s,d)>n$.

Fix positive $\gamma$. In the case of Theorem 4 we choose $ n$ equal to $2^{\left(\frac{1}{2[d/2]}-\gamma\right)s} $.
We prove that for any positive $\gamma$ the above described probability is positive, which, in turn, gives us the statement of the theorem.
To make the proof more transparent we begin with the case $d \in \{4, 5\}$. In these two cases we want to  bound the distance Ramsey number
by $ 2^{\left(\frac{1}{4}-\gamma\right)s} $ from below.

In Section \ref{s3.2} we deal with the case $d \in \{4, 5\}.$  The crucial part of the proof is to bound the probability of each event $ A_S, A_S' $, $ S \subset V_n$. First
we prove a weaker bound on the probability of single events, which is formulated in Theorem \ref{T:K3-K3}. It implies a weaker bound on the distance Ramsey number than the one we are to prove. Next we improve this bound using additional considerations, completing the proof of Theorem \ref{T:main} for $ d\in \{4, 5\} $. In Section \ref{s3.3}
we discuss the proof of Theorem \ref{T:main} for $ d\ge 6 $.
This sequence of presentation is intended to clarify the  method we use.

\bigskip

\subsection{Case $d\in \{4,5\}$}\label{s3.2}

In this case we have $k=3,$ so we deal with  triangles.

To bound the probability of each event $ A_S, A_S'$ accurately enough we need to prove several propositions.
For the sake of simplicity of presentation below we present a simpler method that doesn't give the sharpest bound.
Next we shortly describe how to modify it to obtain a better result.

\begin{theorem}\label{T:K3-K3} The following inequalities hold:
$$P(A_S)\le\mathcal{P}, \, P(A_S')\le\mathcal{P}, \hbox{where} \,\, \mathcal{P} =s! \cdot \left(\frac{7}{8}\right)^{{\frac{s^{2}}{6}}(1+o(1))}.$$
\end{theorem}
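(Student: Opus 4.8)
The plan is to convert the statement ``$G[S]$ has few triangles'' into a lower-tail estimate for a binomial random variable, using a \emph{fixed} packing of the $s$ vertices of $S$ into many edge-disjoint triples. Concretely, I would fix once and for all a Steiner triple system $\mathcal S_0$ on the vertex set $S$ (for the residues of $s$ modulo $6$ where none exists, a near-optimal triangle packing of $K_s$ serves the same purpose); in either case $\mathcal S_0$ consists of $N=(1+o(1))\frac{s^2}{6}$ triples, any two of which meet in at most one vertex and hence share no edge. Since the triples are pairwise edge-disjoint and the edges of $G\sim G(n,1/2)$ are independent, the indicator events ``$T$ is a triangle'', $T\in\mathcal S_0$, are mutually independent, each of probability $1/8$; the same is true for any relabelling $\pi(\mathcal S_0)$ where $\pi$ is a permutation of $S$. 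Note that a single fixed $\mathcal S_0$ is by itself useless: because $s^{3-\eps}$ far exceeds $N$, the few triangles of a bad graph could a priori all lie inside $\mathcal S_0$, so the event $A_S$ imposes no constraint on $\mathcal S_0$ alone. This is exactly where the factor $s!$ must be spent.

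The key step is therefore an averaging (relabelling) argument. For a graph $H$ on $S$ and a permutation $\pi$, let $Y_\pi(H)$ denote the number of triples of $\pi(\mathcal S_0)$ that are triangles of $H$. Averaging over a uniformly random $\pi$, and using that $\pi(T)$ is a uniformly random $3$-subset of $S$, one gets $\ex_\pi Y_\pi(H)=N\cdot\frac{cl(H,3)}{\binom{s}{3}}$. If $H$ realises the event $A_S$, i.e. $cl(H,3)\le s^{3-\eps}$, this average is at most $t_0:=(1+o(1))\,s^{2-\eps}=o(s^2)$, so there is at least one permutation $\pi$ with $Y_\pi(H)\le t_0$. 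Consequently $A_S\subseteq\bigcup_{\pi}\{Y_\pi\le t_0\}$, the union ranging over all $s!$ permutations of $S$.

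Now I would apply the union bound together with exchangeability: every $\pi(\mathcal S_0)$ is a family of $N$ pairwise edge-disjoint triples, so $Y_\pi$ has the same distribution as $\mathrm{Bin}(N,1/8)$, independently of $\pi$. Hence $P(A_S)\le s!\,P\!\left(\mathrm{Bin}(N,1/8)\le t_0\right)$. Since $t_0=o(N)$ lies far below the mean $N/8$, the terms $\binom{N}{i}(1/8)^i(7/8)^{N-i}$ are increasing for $i\le t_0$, so the lower-tail sum is dominated by its last term: $P\!\left(\mathrm{Bin}(N,1/8)\le t_0\right)\le (t_0+1)\binom{N}{t_0}(7/8)^{N-t_0}$. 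Here $\binom{N}{t_0}\le N^{t_0}=e^{o(s^2)}$ and $(7/8)^{N-t_0}=(7/8)^{(1+o(1))s^2/6}$, so the polynomial and binomial prefactors are absorbed into the $(1+o(1))$ in the exponent, giving $P\!\left(\mathrm{Bin}(N,1/8)\le t_0\right)\le(7/8)^{(1+o(1))s^2/6}$. Combining, $P(A_S)\le s!\,(7/8)^{(1+o(1))s^2/6}=\mathcal P$, and the estimate for $A_S'$ is identical because $\bar G\sim G(n,1/2)$ as well.

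The two delicate points, and the places I expect the real work to sit, are: (i) the averaging step, which pays the whole factor $s!$ at once and is what rescues the otherwise-useless fixed Steiner system; and (ii) the bookkeeping in the binomial tail, i.e. checking that $t_0=o(N)$ so that the tail is genuinely governed by $(7/8)^{N}$ and that the prefactor $\binom{N}{t_0}$ is $e^{o(s^2)}$ and hence harmless. The only other technical nuisance is guaranteeing a triangle packing of size $(1+o(1))s^2/6$ for \emph{every} $s$, which follows from standard results on maximal partial triple systems.
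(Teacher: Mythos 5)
Your proof is correct and is essentially the paper's own argument: the same edge-disjoint triangle packing of size $(1+o(1))\frac{s^2}{6}$ (the paper obtains it from R\"odl's theorem rather than from Steiner/partial triple systems), the same averaging over permutations to place $A_S$ inside a union of $s!$ lower-tail events for a $\mathrm{Bin}\left(N,\frac18\right)$ variable, and the same union bound plus tail estimate. The only cosmetic differences are that you permute the packing while the paper permutes the graph (equivalent via $\pi=\sigma^{-1}$), and your tail bound via monotonicity of the binomial terms below the mode is a slightly cleaner version of the paper's term-by-term estimate.
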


We will give the proof of Theorem \ref{T:K3-K3} below. First we state a corollary.

\begin{corollary}\label{Cor:K3-K3} For $d\in\{4,5\}$ we have the following lower bound for distance Ramsey number:

$$R_{{\it D}}(s,s,d) \geq \left({\frac 87} \right)^{{\frac s6} (1+o(1))} \approx 2^{0.032107s}.$$

\end{corollary}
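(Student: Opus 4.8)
The plan is to derive the corollary from Theorem \ref{T:K3-K3} by a direct first-moment argument, exactly along the lines sketched in Section \ref{s3.1}. For $d \in \{4,5\}$ we have $k = [d/2]+1 = 3$, so all events concern triangles. Recall that to conclude $R_{{\it D}}(s,s,d) > n$ it suffices to exhibit one $n$-vertex graph avoiding all the forbidden induced subgraphs, and for this it is enough to verify
$$
P\left(\bigcup_{S \subset V_n} (A_S \cup A_S')\right) < 1,
$$
since then the complementary event has positive probability in $G(n,1/2)$.

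First I would apply the union bound over all $s$-element subsets $S$ of $V_n$. There are $\binom{n}{s} \le n^s/s!$ such subsets, and by Theorem \ref{T:K3-K3} both $P(A_S)$ and $P(A_S')$ are at most $\mathcal{P}$, so
$$
P\left(\bigcup_{S} (A_S \cup A_S')\right) \le \binom{n}{s}\cdot 2\mathcal{P} \le 2\,\frac{n^s}{s!}\cdot s!\left(\frac{7}{8}\right)^{\frac{s^2}{6}(1+o(1))} = 2\, n^s\left(\frac{7}{8}\right)^{\frac{s^2}{6}(1+o(1))}.
$$
Here the factorial $s!$ carried by $\mathcal{P}$ cancels precisely against the $1/s!$ in $\binom{n}{s} \le n^s/s!$, which is the one point worth checking. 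Taking base-$2$ logarithms, the condition that this quantity be below $1$ reads
$$
s\log_2 n + 1 < \frac{s^2}{6}(1+o(1))\log_2\frac{8}{7}.
$$
Dividing by $s$ and absorbing the vanishing $1/s$ term into the $(1+o(1))$ factor yields $\log_2 n \le \frac{s}{6}(1+o(1))\log_2\frac{8}{7}$. Choosing $n$ equal to the threshold value $\left(\frac{8}{7}\right)^{\frac{s}{6}(1+o(1))}$ then gives $R_{{\it D}}(s,s,d) > n = \left(\frac{8}{7}\right)^{\frac{s}{6}(1+o(1))}$, and the stated numerical estimate follows from $\frac{1}{6}\log_2\frac{8}{7} \approx 0.032107$.

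I do not expect any genuine obstacle at this stage: the entire difficulty has been concentrated in Theorem \ref{T:K3-K3}, whose single-event probability bound drives everything. The only bookkeeping that deserves attention is confirming that the $s!$ prefactor is harmless --- which is transparent once one writes $\binom{n}{s} \le n^s/s!$ rather than the cruder $\binom{n}{s} \le n^s$ --- and that the lower-order additive terms in the logarithmic inequality are $o(s)$ and hence swallowed by the $(1+o(1))$ in the exponent of $n$.
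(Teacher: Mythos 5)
Your proposal is correct and follows essentially the same route as the paper: a union bound over all $s$-subsets, cancellation of the $s!$ prefactor in $\mathcal{P}$ against $\binom{n}{s}\le n^s/s!$ (the paper writes this as $\binom{n}{s}\cdot s!\le n^s$), and then choosing $n$ at the resulting threshold $\left(\frac{8}{7}\right)^{\frac{s}{6}(1+o(1))}$. The minor constant $2$ and the additive lower-order terms are absorbed into the $(1+o(1))$ factor exactly as in the paper's $\alpha(s)$.
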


\begin{proof}[Proof of corollary \ref{Cor:K3-K3}.]

We bound the probability of the union of the events $A_S,A'_S$ by the sum of probabilities:

$$
P\left(\bigcup_{S \subset V_n} (A_S \cup A'_S)\right) \leq \sum_{S \subset V_n}
(P(A_S)+P(A'_S)) \leq {n\choose s} \cdot s! \cdot \left(\frac{7}{8}\right)^{{\frac{s^{2}}{6}}(1+o(1))} \leq n^s
\cdot \left(\frac{7}{8}\right)^{{\frac{s^{2}}{6}}(1+o(1))}.
$$

Therefore, there exists a function $\alpha(s) = 1+o(1)$ such that if
$$
n \le \left({\frac 87} \right)^{{\frac s6}\alpha(s)},
$$
then the following inequality holds:
$$
P\left(\overline{\bigcup_{S \subset V_n} (A_S \cup A'_S)}\right)>0.
$$
\end{proof}

For the sake of brevity we use the notation  $T(G)$ instead of $Cl(G,3)$ and  $t(G)$ instead of $|T(G)|$.
To prove Theorem \ref{T:K3-K3} we need the well-known R\"odl's theorem (see \cite{Rodl}).

\begin{theorem}\label{Riddle}

Let $M$ denote a collection of $l$-sets of $\{1,\dots,n\}$ such that for all $A,B \in M$ holds $|A \cap B| \leq m-1$.
Put $g(l,m,n)=\max |M|$. For fixed $ l,m $ and for $ n \to \infty $ holds $ g(l,m,n) \sim \frac{{n\choose m}}{{l\choose m}}\ \left(\lim_{n\to \infty}\frac{g(l,m,n)}{{n\choose m}/{l\choose m}}=1\right).$
\end{theorem}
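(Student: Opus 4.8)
The plan is to prove matching upper and lower bounds on $g(l,m,n)$, both asymptotic to $\binom{n}{m}/\binom{l}{m}$. The key reformulation is that the condition $|A\cap B|\le m-1$ for all distinct $A,B\in M$ is equivalent to saying that no $m$-element subset of $\{1,\dots,n\}$ is contained in two distinct members of $M$; indeed, a common $m$-subset would force $|A\cap B|\ge m$. Thus $M$ is exactly an $m$-packing by $l$-sets, and the upper bound is immediate by double counting: each $A\in M$ contains $\binom{l}{m}$ distinct $m$-subsets, these families are pairwise disjoint as $A$ ranges over $M$, and all of them lie among the $\binom{n}{m}$ available $m$-subsets, so $|M|\binom{l}{m}\le\binom{n}{m}$, that is $g(l,m,n)\le\binom{n}{m}/\binom{l}{m}$.

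The substance of the theorem is the matching lower bound, and here I would use the semirandom (\emph{nibble}) method, encoding the problem as a near-perfect matching problem in an auxiliary hypergraph $\mathcal H$. Let the vertex set of $\mathcal H$ be the collection of all $m$-subsets of $\{1,\dots,n\}$, so $|V(\mathcal H)|=\binom{n}{m}$, and for each $l$-subset $L$ introduce one hyperedge $e_L$ equal to the set of all $m$-subsets contained in $L$. Then $\mathcal H$ is $\binom{l}{m}$-uniform, and a family $M$ satisfying our constraint corresponds precisely to a matching (a set of pairwise disjoint hyperedges) in $\mathcal H$, with $|M|$ equal to the number of edges in the matching. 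So it suffices to show that $\mathcal H$ admits a matching covering a $(1-o(1))$-fraction of its vertices.

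Next I would verify the regularity and codegree hypotheses required to run the nibble. Each vertex $S$ (an $m$-set) has degree equal to the number of $l$-sets containing $S$, namely $\binom{n-m}{l-m}$, so $\mathcal H$ is $D$-regular with $D=\binom{n-m}{l-m}=\Theta(n^{l-m})$. For two distinct vertices $S,S'$ the number of common hyperedges equals the number of $l$-sets containing $S\cup S'$, and since $|S\cup S'|\ge m+1$ this is at most $\binom{n-m-1}{l-m-1}=O(n^{l-m-1})=o(D)$. With uniform degree $D$ and maximum codegree $o(D)$, the Pippenger--Spencer / Frankl--R\"odl theorem on near-perfect matchings yields a matching in $\mathcal H$ covering all but an $o(1)$-fraction of the $\binom{n}{m}$ vertices. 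Translating back, this produces a packing $M$ with $|M|\binom{l}{m}\ge(1-o(1))\binom{n}{m}$, hence $g(l,m,n)\ge(1-o(1))\binom{n}{m}/\binom{l}{m}$, which together with the upper bound gives the claimed asymptotics.

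The main obstacle is precisely this lower bound: the existence of a near-perfect matching. If one invokes Pippenger--Spencer as a black box, the problem collapses to the degree/codegree verification above; but carrying out the nibble from scratch is the delicate part. One samples each surviving hyperedge independently with a small fixed probability, deletes edges that clash with chosen ones together with the vertices they cover, and must show that after each round the residual hypergraph remains approximately regular while the density of uncovered vertices shrinks by a controlled factor. Iterating a bounded number of rounds and controlling the accumulated error terms through concentration inequalities is where the real work of the argument lies.
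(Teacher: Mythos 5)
Your proposal is correct, but note that the paper does not prove this statement at all: it is R\"odl's packing theorem, imported verbatim from \cite{Rodl}, and the paper's only ``proof'' is that citation. Your reformulation is right (the condition $|A\cap B|\le m-1$ for distinct $A,B$ is exactly the statement that no $m$-set lies in two members of $M$), the double-counting upper bound $g(l,m,n)\le\binom{n}{m}/\binom{l}{m}$ is correct, and your encoding of the lower bound as a near-perfect matching problem is sound: the auxiliary hypergraph on the $\binom{n}{m}$ $m$-sets is $\binom{l}{m}$-uniform, $D$-regular with $D=\binom{n-m}{l-m}=\Theta(n^{l-m})$, and has pairwise codegrees at most $\binom{n-m-1}{l-m-1}=o(D)$, so the Pippenger--Spencer theorem indeed yields a matching covering a $(1-o(1))$-fraction of the vertices, hence $g(l,m,n)\ge(1-o(1))\binom{n}{m}/\binom{l}{m}$. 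The one caveat worth stating plainly: Pippenger--Spencer is itself proved by the semirandom nibble and historically generalizes the very theorem you are proving, so your argument is a clean reduction to a stronger black box rather than an independent proof; as you acknowledge, all the analytic content (the round-by-round regularity control and concentration estimates) lives inside that box. This is a perfectly legitimate modern route --- it is essentially how the result is presented in textbook treatments --- but if the black box is disallowed, your sketch of the iteration would need to be carried out in full to constitute a proof.
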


From now on we say that two graphs are {\it disjoint} if they have no edges in common.
Fix an arbitrary maximum system of pairwise disjoint triangles in the set $S=\{1,\dots,s\}$. We use $Tr(S)$ to denote this system.

\begin{corollary} (from Theorem \ref{Riddle})  Let $s\to\infty$. There exists $\psi(s)$, $\psi(s)\to 0$ as $s \to \infty,$ such that the following equality holds:
$$|Tr(S)| = \frac{s^2}{6}(1+\psi(s)).$$
\end{corollary}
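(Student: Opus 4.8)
The plan is to recognize $|Tr(S)|$ as an instance of the extremal quantity $g(l,m,n)$ appearing in Theorem~\ref{Riddle}, and then read off the asymptotics directly.

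First I would translate the combinatorial condition. Every triangle on the vertex set $S=\{1,\dots,s\}$ is determined by its three vertices, so it corresponds to a $3$-element subset of $\{1,\dots,s\}$, and its edge set is the collection of the three pairs drawn from that subset. Two triangles share an edge precisely when their vertex sets meet in at least two points; hence two triangles are \emph{disjoint} (in the edge sense used here) exactly when the corresponding $3$-sets satisfy $|A\cap B|\le 1$. Therefore a maximum system of pairwise disjoint triangles is literally the same object as a maximum family $M$ of $3$-subsets of $\{1,\dots,s\}$ with pairwise intersections of size at most $1$, and the two extremal cardinalities coincide: $|Tr(S)| = g(3,2,s)$, i.e.\ the case $l=3$, $m=2$, $n=s$ of Theorem~\ref{Riddle}.

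Applying Theorem~\ref{Riddle} with these parameters gives $g(3,2,s)\sim \binom{s}{2}/\binom{3}{2} = \tfrac13\binom{s}{2} = \tfrac{s(s-1)}{6}$ as $s\to\infty$. Writing $\tfrac{s(s-1)}{6} = \tfrac{s^2}{6}\bigl(1-\tfrac1s\bigr)$ and absorbing both this factor and the $(1+o(1))$ coming from R\"odl's theorem into a single error term $\psi(s) := \tfrac{6}{s^2}|Tr(S)| - 1$, we obtain $\psi(s)\to 0$, which is exactly the claimed equality $|Tr(S)| = \tfrac{s^2}{6}(1+\psi(s))$.

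I would also record the elementary half of the estimate to make the statement transparent: since the triangles of $Tr(S)$ use pairwise disjoint triples of edges out of the $\binom{s}{2}$ available, one has $|Tr(S)|\le \tfrac13\binom{s}{2}$ with no machinery at all, so the entire content lies in the matching lower bound, which is precisely what Theorem~\ref{Riddle} supplies. I do not expect a genuine obstacle here: the role of R\"odl's theorem is exactly to make this corollary a one-line consequence once edge-disjointness has been rephrased as a bound on pairwise intersections. The only point demanding care is verifying that the two extremal problems — packing edge-disjoint triangles and choosing $3$-sets that pairwise intersect in at most one element — are the same, and that the parameters are matched as $l=3$, $m=2$.
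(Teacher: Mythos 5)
Your proposal is correct and follows exactly the route the paper intends: the paper states this corollary without proof precisely because it is the immediate application of R\"odl's theorem with $l=3$, $m=2$, $n=s$, after observing that edge-disjointness of triangles is the same as the $3$-sets having pairwise intersections of size at most $1$, giving $|Tr(S)| = g(3,2,s) \sim \binom{s}{2}/\binom{3}{2} = \frac{s^2}{6}(1+o(1))$. Your identification of the parameters and the translation between edge-disjoint triangle packings and intersecting-set families is exactly the (unwritten) argument.
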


Consider a graph $H = (S,E)$ of order $s$ and a permutation $\sigma$ of its vertex set $S$. Let $\sigma(H)$ denote the graph with edges
$\sigma(E) = \{(\sigma(a),\sigma(b)) \mid (a,b) \in E\}$. Consider the value $F(\sigma,H) = |T(\sigma(H))\cap Tr(S)|,$ which is the
number of triangles that the sets $T(\sigma(H))$ and $Tr(S)$ have in common.

We choose a random permutation (from the uniform distribution over all permutations) and find the expectation of $F(\sigma,H)$. Define the function $ \psi_1 $ from the following equation:
$$
\frac{s}{(s-1)(s-2)}(1+\psi(s)) = \frac{1}{s}(1+\psi_1(s)).
$$
It is clear that $ \psi_1(s) \to 0 $ as $ s \to \infty $.

\begin{claim}\label{C:Mat(F)}  For every graph $ H $ on $ s $ vertices the following holds:

 $$ \mathbb{E} (F(\sigma,H)) = \dfrac{|T(H)|}{s}(1+\psi_1(s)).$$

\end{claim}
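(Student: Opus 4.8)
The plan is to compute $\mathbb{E}(F(\sigma,H))$ directly by linearity of expectation, summing over the triangles of the fixed system $Tr(S)$. I would begin by writing
$$F(\sigma,H) = \sum_{\tau \in Tr(S)} \mathbf 1[\tau \in T(\sigma(H))],$$
so that $\mathbb{E}(F(\sigma,H)) = \sum_{\tau \in Tr(S)} P(\tau \in T(\sigma(H)))$. The structural observation that drives everything is that the triangles of $\sigma(H)$ are precisely the $\sigma$-images of the triangles of $H$; consequently, for a fixed triangle $\tau$ we have $\tau \in T(\sigma(H))$ if and only if $\sigma^{-1}(\tau) \in T(H)$, where $\sigma^{-1}(\tau)$ denotes the $3$-subset obtained by applying $\sigma^{-1}$ to the three vertices of $\tau$.

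Next I would invoke symmetry. Since $\sigma$ is chosen uniformly among all permutations of $S$, the map $\sigma^{-1}$ is also uniform, and hence for any fixed $3$-subset $\tau$ the set $\sigma^{-1}(\tau)$ is distributed uniformly over all $\binom{s}{3}$ three-element subsets of $S$. Because $T(H)$ is itself a family of $3$-subsets (the vertex sets of the triangles of $H$), this gives
$$P(\tau \in T(\sigma(H))) = P(\sigma^{-1}(\tau) \in T(H)) = \frac{|T(H)|}{\binom{s}{3}},$$
a quantity that does not depend on the particular $\tau$. Summing this constant value over the $|Tr(S)|$ triangles of the system yields
$$\mathbb{E}(F(\sigma,H)) = |Tr(S)| \cdot \frac{|T(H)|}{\binom{s}{3}}.$$

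To finish I would substitute $|Tr(S)| = \frac{s^2}{6}(1+\psi(s))$ from the Corollary of Theorem \ref{Riddle} together with $\binom{s}{3} = \frac{s(s-1)(s-2)}{6}$. The two factors of $\frac{s^2}{6}$ and $\frac{6}{s(s-1)(s-2)}$ combine to give the prefactor $\frac{s}{(s-1)(s-2)}(1+\psi(s))$, and the defining equation for $\psi_1$ rewrites this as $\frac{1}{s}(1+\psi_1(s))$, producing exactly the asserted formula $\mathbb{E}(F(\sigma,H)) = \frac{|T(H)|}{s}(1+\psi_1(s))$.

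The computation is essentially mechanical once the two structural points are in place, so I do not anticipate a genuine obstacle. The step that most deserves care is the symmetry claim that the $\sigma^{-1}$-image of a fixed triple is uniform over all triples: this is where both the uniformity of $\sigma$ and the fact that $Tr(S)$ was fixed in advance, independently of $\sigma$, are used. It is precisely this independence that makes $P(\tau \in T(\sigma(H)))$ the same for every $\tau$, allowing the sum over $Tr(S)$ to collapse into a single closed-form expression.
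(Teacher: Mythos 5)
Your proof is correct, and it is essentially the paper's own argument run in the dual direction. The paper expands $F(\sigma,H)$ as a sum of indicators over $\Delta \in T(H)$ of the event $\sigma(\Delta)\in Tr(S)$ and then counts permutations explicitly (for each target triangle there are $3!\,(s-3)!$ permutations, hence $3!\,(s-3)!\,|Tr(S)|$ in total per $\Delta$), whereas you expand over $\tau\in Tr(S)$ of the event $\sigma^{-1}(\tau)\in T(H)$ and invoke the uniformity of $\sigma^{-1}(\tau)$ over $3$-subsets. Both decompositions collapse to the identical double count $\mathbb{E}(F(\sigma,H)) = |T(H)|\,|Tr(S)|\big/\binom{s}{3}$, followed by the same substitution of $|Tr(S)|=\frac{s^2}{6}(1+\psi(s))$ and the defining equation of $\psi_1$. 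Your version is marginally cleaner in that the key probability is a one-line symmetry observation rather than an explicit permutation count, and you correctly flag the one point that needs care (that $Tr(S)$ is fixed independently of $\sigma$); but there is no difference in substance or generality between the two arguments.
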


\begin{proof}

We have:  $$ \mathbb{E} (F(\sigma,H)) = \sum_{\sigma}  (|T(\sigma(H))\cap Tr(S)|) \cdot \mathbb{P}(\sigma). $$ The number of common triangles can be calculated as follows.
Take a triangle $\Delta \in T(H)$. Consider the indicator function of the triangle $\sigma(\Delta)$ being an element of the set $Tr(S)$:

$$  \mathbb{I}(\sigma(\Delta) \in Tr(S)) =
\begin{cases}
1, &\text{ if } ~ \sigma(\Delta) \in Tr(S), \\
0, &\text{ if } ~ \sigma(\Delta) \not\in Tr(S).\\
\end{cases}
$$

We have $$|T(\sigma(H))\cap Tr(S)| = \sum_{\Delta \in T(H)} \mathbb{I}(\sigma(\Delta) \in Tr(S)).$$
Substituting this expression in the formula for the expectation of the number of common triangles we get

$$\sum_{\sigma}  (|T(\sigma(H))\cap Tr(S)|)\cdot \mathbb{P}(\sigma)= \sum_{\sigma} \sum_{\Delta \in T(H)} \mathbb{I}(\sigma(\Delta) \in Tr(S))
\cdot \mathbb{P}(\sigma) =$$
$$= \sum_{\Delta\in T(H)} \sum_{\sigma} \mathbb{I}(\sigma(\Delta) \in Tr(S)) \cdot \mathbb{P}(\sigma).$$

For every pair of triangles $\Delta, \Delta' \in Tr(S)$ the number of permutations $\sigma,$ such that $\sigma(\Delta)= \Delta',$ equals $(s-3)! \cdot 3!$ (there are $3!$ ways to rearrange vertices of the triangle $\Delta'$, the other vertices are permuted arbitrarily). Thus the number of permutations $\sigma$ such that $\sigma(\Delta) \in Tr(S),$ is equal to $(s-3)! \cdot 3! \cdot |Tr(S)|$.

Since $|Tr(S)| = \frac{s^2}{6}(1+\psi(s)),$ we have the following chain of equalities:
    
$$\sum_{\sigma} \mathbb{I}(\sigma(\Delta) \in Tr(S)) \cdot \mathbb{P}(\sigma) = \dfrac{(s-3)! \cdot 6 \cdot \frac{s^2}{6}}{s!}(1+\psi(s)) =
$$
$$
= \dfrac{s^2}{s(s-1)(s-2)}(1+\psi(s)) = \frac{1}{s} (1+\psi_1(s)).$$

This implies

$$\sum_{\Delta\in T(H)} \sum_{\sigma} \mathbb{I}(\sigma(\Delta) \in Tr(S)) \cdot \mathbb{P}(\sigma)
= (1+\psi_1(s)) \sum_{\Delta\in T(H)} \frac{1}{s} = \frac{|T(H)|}{s} (1+\psi_1(s)).$$

\end{proof}

\begin{corollary}\label{Cor:s2-eps}

Let $ H $ be a graph on $ s $ vertices. If the inequality $|T(H)| \leq s^{3-\delta}$ holds for some $\delta > 0,$ then there exists a permutation
$\sigma$ of the set $V(H)$ such that $F(\sigma,H) \leq s^{2-\delta}(1+\psi_1(s))$.

\end{corollary}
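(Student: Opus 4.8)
The plan is to deduce the statement from Claim \ref{C:Mat(F)} by a one-line first-moment (averaging) argument. Claim \ref{C:Mat(F)} gives the \emph{exact} expectation of $F(\sigma,H)$ when $\sigma$ is drawn uniformly at random from all permutations of $V(H)$:
$$
\mathbb{E}(F(\sigma,H)) = \frac{|T(H)|}{s}(1+\psi_1(s)).
$$
Thus the whole task reduces to converting this statement about the \emph{mean} of $F(\sigma,H)$ into the \emph{existence} of one permutation whose value is at most the claimed bound.

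First I would substitute the hypothesis $|T(H)| \le s^{3-\delta}$ into the formula from the claim. Since $\psi_1(s) \to 0$ as $s \to \infty$, for all sufficiently large $s$ we have $1+\psi_1(s) > 0$, so that
$$
\mathbb{E}(F(\sigma,H)) = \frac{|T(H)|}{s}(1+\psi_1(s)) \le \frac{s^{3-\delta}}{s}(1+\psi_1(s)) = s^{2-\delta}(1+\psi_1(s)).
$$

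Next I would invoke the elementary fact that a random variable cannot everywhere strictly exceed its own expectation: if the average of $F(\sigma,H)$ over all $s!$ permutations is at most $s^{2-\delta}(1+\psi_1(s))$, then at least one permutation $\sigma$ must satisfy $F(\sigma,H) \le s^{2-\delta}(1+\psi_1(s))$, which is exactly the desired conclusion.

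There is essentially no obstacle here: the substantive content is carried entirely by Claim \ref{C:Mat(F)}, and what remains is the standard observation that the minimum of a finite family of nonnegative integers is bounded above by their mean. The only point meriting a word of care is that the stated bound $s^{2-\delta}(1+\psi_1(s))$ is meaningful (in particular nonnegative) precisely because $1+\psi_1(s)>0$ for large $s$; this is why the estimate is phrased in the asymptotic regime $s \to \infty$ already in force throughout the argument.
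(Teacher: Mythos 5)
Your proposal is correct and is exactly the argument the paper intends: the corollary is stated immediately after Claim \ref{C:Mat(F)} with no written proof precisely because it follows by this one-line first-moment observation, namely that some permutation attains a value of $F(\sigma,H)$ not exceeding the mean $\frac{|T(H)|}{s}(1+\psi_1(s)) \le s^{2-\delta}(1+\psi_1(s))$. Your remark that $1+\psi_1(s)>0$ (valid since $|Tr(S)|>0$) is the right point of care and completes the deduction.
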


\begin{proof} [Proof of Theorem \ref{T:K3-K3}]

Let $ G \sim G(n,1/2) $. 

Let $ \delta $ from Corollary \ref{Cor:s2-eps}
be equal to $ \eps$ from Section \ref{s3.1}. Set $ z = s^{2-\varepsilon} (1+\psi_1(s)) $.

For any $s$-subset $S$ of the set $V(G)$ we have

$$\mathbb{P}(A_S)= \mathbb{P}\left(|T(G[S])| \leq s^{3-\varepsilon}\right) \leq $$

(using Corollary \ref{Cor:s2-eps})

$$ \leq   \mathbb{P}\left(\bigcup_{\sigma} \left(F\left(\sigma,G[S]\right) \leq z\right)\right)
\leq \sum_{\sigma} \sum_{i=0}^{z} \mathbb{P}\left(F\left(\sigma,G[S]\right)=i\right) = s! \cdot \sum_{i=0}^{z} \mathbb{P}\left(F\left(\sigma,G[S]\right)=i\right),$$
where $ \sigma $ is an
arbitrary permutation.

Let us bound the sum. Put $ a = \frac{s^2}{6}(1+\psi(s)). $ Taking into account that $ |Tr(S)| = a $ (we also assume that $ s $ is such that $ a/2 > z $) we obtain:

$$\sum_{i=0}^{z} \mathbb{P}\left(F\left(\sigma,G[S]\right)=i\right) =
\sum_{i=0}^{z} {a\choose i}\cdot \left(\frac{1}{8}\right)^{i}\cdot \left(\frac{7}{8}\right)^{a-i}
\leq (z+1) a^z \left(\frac{7}{8}\right)^{a} = $$

$$ = 2^{o(s^{2})} \left(\frac{7}{8}\right)^{\frac{s^2}{6}(1+o(1))} = \left(\frac{7}{8}\right)^{{\ss} (1+o(1))}.$$

By symmetry, $\mathbb{P}(A'_S)$ can be bounded analogously.
\end{proof}

Next we describe how to improve the obtained bound.
Take a graph $H = (S,E)$ of order $s$. Instead of $Tr(S)$ we consider a maximum system of pairwise disjoint graphs isomorphic to $K_k$ on the set of vertices $S=\{1,\dots,s\}$.
Let $Sys(S,k)$  denote one such system.
For a fixed $ k $ and for $ s \to \infty $ R\"odl's theorem implies that $|Sys(S,k)| \sim \frac{s^2}{k(k-1)},$ or, equivalently,
$|Sys(S,k)| = \frac{s^2}{k(k-1)}(1+\xi_k(s))$.

Let $\sigma$ be a permutation of the set $V(H)$. Let $F_k(\sigma,H) $ denote the number of such triangles from the set
$T(\sigma(H))$  that are subgraphs of one of the complete subgraphs of size $k$ from $Sys(S,k)$.
Below we indicate the changes in the proof of Theorem \ref{T:K3-K3}. Assume $k \geq 4$.

Let us generalize Claim \ref{C:Mat(F)}. Before the claim we defined $ \psi_1 $. Similarly to how we defined $\psi_1$ based on $\psi$ we define $ \xi^1_k $ based on $ \xi_k $.

\begin{claim}\label{C:Mat(Fk)} Fix a natural $k\ge 4$. For every graph $ H $ with $ s $ vertices we have:

$$\mathbb{E}(F_k(\sigma,H))  = \dfrac{(k-2)|T(H)|}{s} (1+\xi_k^1(s)).$$

\end{claim}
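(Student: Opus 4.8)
The plan is to mirror the proof of Claim \ref{C:Mat(F)}, replacing the disjoint-triangle system $Tr(S)$ by the edge-disjoint $K_k$-system $Sys(S,k)$ and accounting for the fact that each clique now carries several triangles. First I would write $F_k(\sigma,H)$ as a sum of indicators over the triangles of $H$: for $\Delta\in T(H)$ let $\mathbb{I}(\sigma(\Delta))$ be the indicator that $\sigma(\Delta)$ is a subgraph of some $K\in Sys(S,k)$. Since $\sigma$ maps $T(H)$ bijectively onto $T(\sigma(H))$, we have $F_k(\sigma,H)=\sum_{\Delta\in T(H)}\mathbb{I}(\sigma(\Delta))$, and by linearity of expectation $\mathbb{E}(F_k(\sigma,H))=\sum_{\Delta\in T(H)}\mathbb{P}\big(\sigma(\Delta)\subseteq K\text{ for some }K\in Sys(S,k)\big)$. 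Note that because $\Delta$ is a complete triangle, $\sigma(\Delta)$ is a subgraph of $K$ exactly when all three of its vertices are vertices of $K$.

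The key combinatorial step is to count the $3$-subsets of $S$ that lie inside some clique of $Sys(S,k)$. Here I would use that the cliques in $Sys(S,k)$ are pairwise edge-disjoint: if two distinct cliques shared two vertices, they would share the edge joining them, so any two cliques meet in at most one vertex. Consequently no $3$-subset of $S$ can sit inside two different cliques, and the number of such ``good'' triples is exactly $\binom{k}{3}\,|Sys(S,k)|$, with no double counting. Since $\sigma(\Delta)$ is uniformly distributed over the $3$-subsets of $S$ (equivalently, for each fixed good triple $\Delta'$ there are $3!\,(s-3)!$ permutations with $\sigma(\Delta)=\Delta'$, exactly as in Claim \ref{C:Mat(F)}), the probability above equals $\binom{k}{3}\,|Sys(S,k)|/\binom{s}{3}$, independently of $\Delta$.

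Finally I would substitute $|Sys(S,k)|=\frac{s^2}{k(k-1)}(1+\xi_k(s))$ and simplify. Using $\binom{k}{3}/\big(k(k-1)\big)=\frac{k-2}{6}$ together with $\binom{s}{3}=\frac{s(s-1)(s-2)}{6}$, the per-triangle probability becomes $\frac{(k-2)s}{(s-1)(s-2)}(1+\xi_k(s))$. Defining $\xi_k^1$ from $\xi_k$ in exact analogy with $\psi_1$, i.e.\ by $\frac{s}{(s-1)(s-2)}(1+\xi_k(s))=\frac{1}{s}(1+\xi_k^1(s))$, turns this into $\frac{k-2}{s}(1+\xi_k^1(s))$. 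Summing the constant value over the $|T(H)|$ triangles then yields $\mathbb{E}(F_k(\sigma,H))=\frac{(k-2)|T(H)|}{s}(1+\xi_k^1(s))$, which is the claim.

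The main obstacle is the no-double-counting argument in the second paragraph. For $k=3$ the members of $Tr(S)$ are themselves the relevant triangles and are literally disjoint, so each is counted once for free; for $k\ge 4$ one must instead verify that the $\binom{k}{3}$ triples sitting inside distinct cliques never coincide. This is precisely where edge-disjointness is used, via the observation that two cliques of $Sys(S,k)$ overlap in at most one vertex. Everything else is the same bookkeeping as in Claim \ref{C:Mat(F)}, with the single extra factor $k-2$ arising from the ratio $\binom{k}{3}/\binom{k}{2}$ of triangles to edges in $K_k$.
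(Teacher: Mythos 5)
Your proof is correct and takes essentially the same approach as the paper's: both reduce $\mathbb{E}(F_k(\sigma,H))$ by linearity to a per-triangle probability that $\sigma(\Delta)$ lands inside some clique of $Sys(S,k)$, and your count $\binom{k}{3}\,|Sys(S,k)|\big/\binom{s}{3}$ is the same quantity as the paper's $(s-3)!\,k(k-1)(k-2)\,|Sys(S,k)|\big/s!$. The only (welcome) difference is that you spell out the no-double-counting step — edge-disjoint cliques share at most one vertex, so no triangle lies in two of them — which the paper uses implicitly when it sums over the cliques of the system.
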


\begin{proof}

The proof is similar to the proof of Claim \ref{C:Mat(F)}. We point out several differences in calculations.

Let $\Delta \in T(H)$. For every $k$-clique $K_k \in Sys(S,k)$ the number of permutations $\sigma$ such that $K_k$ contains $\sigma(\Delta)$ as a subgraph, equals
$(s-3)!k(k-1)(k-2)$. Thus, the number of permutations $\sigma$ such that $\sigma(\Delta) \in Sys(S,k)$ equals $(s-3)!k(k-1)(k-2) \cdot |Sys(S,k)|$.

This implies

$$
\sum_{\sigma} \mathbb{I}(\sigma(\Delta) \in Sys(S,k)) \cdot \mathbb{P}(\sigma) = \frac{1}{s!} (s-3)!k(k-1)(k-2) \cdot \frac{s^2}{k(k-1)} (1+\xi_k(s)) =
\frac {k-2} s  (1+\xi_k^1(s)),
$$

$$ \mathbb{E} (F_k(\sigma,H)) = \sum_{\Delta\in T(H)} \sum_{\sigma} \mathbb{I}(\sigma(\Delta) \in Sys(S,k)) \cdot \mathbb{P}(\sigma)
= \frac{(k-2)|T(H)|}{s} (1+\xi_k^1(s)).$$

\end{proof}

\begin{corollary}\label{Cor5}

Fix a natural $k$ greater than 4 and positive $\delta$. Let $ H $ by a graph on $ s $ vertices. If
$|T(H)| \le s^{3-\delta},$ then there exists a permutation $\sigma$ of the set $V(H)$ such that $F_k(\sigma,H) \leq (k-2)s^{2-\delta}(1+\xi_k^1(s))$.

\end{corollary}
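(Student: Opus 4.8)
The plan is to deduce this corollary from Claim \ref{C:Mat(Fk)} by a one-line first-moment (averaging) argument, in exact parallel to how Corollary \ref{Cor:s2-eps} was derived from Claim \ref{C:Mat(F)}. The governing principle is that a random variable always takes some value no larger than its expectation, so it suffices to compute $\mathbb{E}(F_k(\sigma,H))$ over a uniformly random permutation $\sigma$ and show that this expectation is at most $(k-2)s^{2-\delta}(1+\xi_k^1(s))$.

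First I would invoke Claim \ref{C:Mat(Fk)}, which gives the exact value
$$\mathbb{E}(F_k(\sigma,H)) = \frac{(k-2)|T(H)|}{s}(1+\xi_k^1(s)).$$
Next I would substitute the hypothesis $|T(H)| \le s^{3-\delta}$ into this identity to obtain
$$\mathbb{E}(F_k(\sigma,H)) \le \frac{(k-2)s^{3-\delta}}{s}(1+\xi_k^1(s)) = (k-2)s^{2-\delta}(1+\xi_k^1(s)).$$
Finally, since the minimum value of $F_k(\sigma,H)$ over all permutations $\sigma$ cannot exceed its average, there must exist at least one permutation $\sigma$ of $V(H)$ for which $F_k(\sigma,H) \le \mathbb{E}(F_k(\sigma,H)) \le (k-2)s^{2-\delta}(1+\xi_k^1(s))$, which is exactly the claimed bound.

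There is no substantial obstacle here: the corollary is a direct averaging consequence of the expectation computation already carried out in Claim \ref{C:Mat(Fk)}. The only minor point worth checking is that the substitution of $|T(H)| \le s^{3-\delta}$ respects the sign of the $(1+\xi_k^1(s))$ factor; since $\xi_k^1(s) \to 0$ as $s \to \infty$, this factor is positive for all sufficiently large $s$ and the inequality is preserved. I would also note, just as for Corollary \ref{Cor:s2-eps}, that the argument is entirely non-constructive: it guarantees the existence of a favourable permutation without exhibiting one.
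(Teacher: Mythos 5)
Your proof is correct and is exactly the argument the paper intends: the corollary is stated without proof precisely because it follows from Claim \ref{C:Mat(Fk)} by the same first-moment averaging that derives Corollary \ref{Cor:s2-eps} from Claim \ref{C:Mat(F)} --- some permutation attains a value of $F_k(\sigma,H)$ no larger than its expectation, which the hypothesis $|T(H)|\le s^{3-\delta}$ bounds by $(k-2)s^{2-\delta}\bigl(1+\xi_k^1(s)\bigr)$. Your side remark about the sign of the $\bigl(1+\xi_k^1(s)\bigr)$ factor for large $s$ is a sensible precaution and consistent with the paper's asymptotic setting.
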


In the case $k=4$ this corollary gives the following theorem.

\begin{theorem}\label{T:K3-K4}
$$
\mathbb{P}(A_S)\le\mathcal{P}, \, \mathbb{P}(A_S')\le\mathcal{P}, \hbox{where} \,\, \mathcal{P} = s! \cdot \left(\frac{41}{64}\right)^{{\frac{s^{2}}{12}}(1+o(1))}.
$$
\end{theorem}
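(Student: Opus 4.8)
The plan is to run the proof of Theorem~\ref{T:K3-K3} essentially verbatim, but with the system $Sys(S,4)$ of edge-disjoint $4$-cliques and the statistic $F_4$ in place of $Tr(S)$ and $F$. Recall that for $d\in\{4,5\}$ we have $k=3$, so $A_S=\{|T(G[S])|\le s^{3-\eps}\}$. First I would invoke Corollary~\ref{Cor5} with $k=4$ and $\delta=\eps$: whenever $|T(H)|\le s^{3-\eps}$ there is a permutation $\sigma$ of $V(H)$ with $F_4(\sigma,H)\le z$, where $z:=2s^{2-\eps}(1+\xi_4^1(s))$. Hence $A_S\subseteq\bigcup_\sigma\{F_4(\sigma,G[S])\le z\}$, and since $G[S]\sim G(s,1/2)$ is invariant under relabelling of its vertices, the quantity $\mathbb{P}(F_4(\sigma,G[S])\le z)$ does not depend on $\sigma$, so a union bound over the $s!$ permutations gives $\mathbb{P}(A_S)\le s!\cdot\mathbb{P}(F_4(\sigma,G[S])\le z)$ for one fixed $\sigma$.

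The core is then the estimate of $\mathbb{P}(F_4(\sigma,G[S])\le z)$. Put $a:=|Sys(S,4)|=\frac{s^2}{12}(1+\xi_4(s))$, which is the value R\"odl's theorem (Theorem~\ref{Riddle}) gives for $4$-sets of $S$ meeting pairwise in at most one vertex. Because any two members of $Sys(S,4)$ share at most one vertex, their edge sets are disjoint; consequently the numbers $X_1,\dots,X_a$ of triangles of $\sigma(G[S])$ lying inside the respective $K_4$'s are mutually independent, and no triangle is counted in two cliques, so $F_4(\sigma,G[S])=\sum_{j}X_j$. The key observation is that every clique that is \emph{not} triangle-free contributes at least $1$ to this sum, whence $\{F_4\le z\}$ forces at least $a-z$ of the cliques to be triangle-free. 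Writing $N_0$ for the number of triangle-free cliques among the $a$ of them, we obtain $\mathbb{P}(F_4\le z)\le\mathbb{P}(N_0\ge a-z)$.

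The genuinely new ingredient is the probability that a single $K_4$ on four fixed vertices, with its six edges present independently with probability $1/2$, contains no triangle. Counting labelled triangle-free graphs on four vertices by number of edges gives $1+6+15+16+3=41$ out of $2^6=64$: all graphs with at most two edges qualify, among the $\binom{6}{3}=20$ three-edge graphs exactly the $4$ triangles are discarded, the only triangle-free four-edge graphs are the three $4$-cycles, and there are none with five or six edges. Thus each clique is triangle-free with probability $41/64$ independently, so $N_0\sim\mathrm{Bin}(a,41/64)$ and
\[
\mathbb{P}(N_0\ge a-z)=\sum_{j=0}^{z}\binom{a}{j}\left(\frac{41}{64}\right)^{a-j}\left(\frac{23}{64}\right)^{j}\le (z+1)\,a^{z}\left(\frac{41}{64}\right)^{a-z}.
\]
Since $z=2s^{2-\eps}(1+o(1))=o(s^2)$ and $a=\frac{s^2}{12}(1+o(1))$, the prefactor $(z+1)a^{z}(64/41)^{z}$ equals $2^{o(s^2)}$ and is absorbed into the exponent, giving $\mathbb{P}(F_4\le z)\le\left(\frac{41}{64}\right)^{\frac{s^2}{12}(1+o(1))}$. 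Multiplying by $s!$ yields the bound $\mathcal{P}$ on $\mathbb{P}(A_S)$; and since $\bar G\sim G(n,1/2)$ as well, the same estimate holds for $\mathbb{P}(A_S')$ by symmetry.

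The step I expect to be the real content — and the only genuinely new one relative to Theorem~\ref{T:K3-K3} — is the combinatorial evaluation of the per-clique triangle-free probability $41/64$, together with the reduction that replaces the full distribution of $\sum_j X_j$ by the much simpler binomial count $N_0$ of triangle-free cliques. Once edge-disjointness (hence independence of the $X_j$) is secured from R\"odl's theorem, the tail estimate is routine. The improvement over the base $7/8$ comes precisely because a $4$-clique must simultaneously avoid all four of its triangles, which is rarer than the absence of a single triangle and pushes the per-$s^2$ base from $(7/8)^{1/6}$ down to $(41/64)^{1/12}$.
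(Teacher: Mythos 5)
Your proposal is correct and follows essentially the same route as the paper: reduce $A_S$ via Corollary~\ref{Cor5} and a union bound over the $s!$ permutations to a binomial tail for the number of triangle-containing cliques in $Sys(S,4)$, using the per-clique triangle-free probability $41/64$, and absorb the $2^{o(s^2)}$ prefactors. In fact you make explicit two points the paper leaves implicit — the edge-disjointness of the cliques in $Sys(S,4)$ that justifies independence, and the enumeration $1+6+15+16+3=41$ of labelled triangle-free graphs on four vertices.
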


\begin{proof}

The  proof is analogous to the proof of Theorem \ref{T:K3-K3}. While in that proof we used Corollary \ref{Cor:s2-eps}, here we apply Corollary \ref{Cor5}.
 We use the same notation as in the proof of Theorem \ref{T:K3-K3}. That is, let $ \eps $ be the one appeared in Section \ref{s3.1}.
Put $ \delta $ from Corollary \ref{Cor5} to be equal to $ \eps $. We have the following equality: $ z = 2 s^{2-\eps}(1+\xi_4'(s)) $.

We already know that $|Sys(S,4)| = \frac{s^2}{12}(1+\xi_4(s))$. Hence $ a = \frac{s^2}{12}(1+\xi_4(s))$. In fact, to complete the proof it remains to prove that

$$
\sum_{i=0}^{z} \mathbb{P}\left(F_k\left(\sigma,G[S]\right)=i\right) \le \left(\frac{41}{64}\right)^{{\frac{s^{2}}{12}}(1+o(1))}.
$$
The event $ \left\{F_k\left(\sigma,G[S]\right)=i\right\} $ implies the following event: at most $ i $ cliques from $ Sys(S,4) $ contain
at least one triangle from the graph $\sigma(G[S])$. At the same time the probability of the event that $G(4,1/2)$ does not contain any triangles is
$\frac{41}{64}$. Therefore, for large $ s $ we have:

$$
\sum_{i=0}^{z} \mathbb{P}\left(F_k\left(\sigma,G[S]\right)=i\right) \leq
\sum_{i=0}^{z} \sum_{j=0}^i {a\choose j} \cdot \left(\frac{23}{64}\right)^{j} \cdot
\left(\frac{41}{64}\right)^{a-j} \leq \sum_{i=0}^{z} (i+1)
a^i \left(\frac{41}{64}\right)^{a} \le
$$
$$
\leq (z+1)^2 a^z \left(\frac{41}{64}\right)^{\frac{s^2}{12} (1+o(1))} = \left(\frac{41}{64}\right)^{\frac{s^2}{12} (1+o(1))},
$$
which completes the proof.

\end{proof}

Analogously to Corollary \ref{Cor:K3-K3} we obtain

\begin{corollary}\label{Cor6} For $d\in \{4, 5\}$ the following lower bound holds:

$$R_{{\it D}}(s,s,d) \geq \left({\frac{64}{41}} \right)^{{\frac {s}{12}} (1+o(1))} \approx 2^{0.053537s}.$$

\end{corollary}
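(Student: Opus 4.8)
The plan is to mimic, almost verbatim, the argument that established Corollary \ref{Cor:K3-K3}, the only change being that the single-event estimate is now taken from the sharper Theorem \ref{T:K3-K4} rather than from Theorem \ref{T:K3-K3}. Working in the model $G \sim G(n,1/2)$ of Section \ref{s3.1}, I would first apply the union bound over all $s$-element subsets $S \subseteq V_n$ and then invoke Theorem \ref{T:K3-K4} on each term:
$$
P\left(\bigcup_{S \subset V_n} (A_S \cup A'_S)\right) \leq \sum_{S \subset V_n}(P(A_S)+P(A'_S)) \leq {n \choose s} \cdot s! \cdot \left(\frac{41}{64}\right)^{\frac{s^2}{12}(1+o(1))} \leq n^s \cdot \left(\frac{41}{64}\right)^{\frac{s^2}{12}(1+o(1))}.
$$
Here I have used ${n \choose s} \leq n^s$ and absorbed $s!$ into the exponent, exactly as in the proof of Corollary \ref{Cor:K3-K3}.

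The one point that requires a word of care is this absorption step, so I would make it explicit. Since $\log_2(s!) = O(s\ln s) = \oss = o(s^2)$, the factorial prefactor contributes only a $2^{o(s^2)}$ factor and is therefore swallowed by the $(1+o(1))$ sitting in the exponent of $41/64$; the same is true of the constant $2$ coming from $P(A_S)+P(A'_S)$. Thus the displayed bound simplifies to $n^s \cdot (41/64)^{(s^2/12)(1+o(1))}$, which is of the same shape as the bound obtained for $k=3$ but with base $41/64$ in place of $7/8$ and exponent $s^2/12$ in place of $\ss$.

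It then remains, exactly as before, to choose $n$. There exists a function $\alpha(s)=1+o(1)$ such that whenever
$$
n \le \left(\frac{64}{41}\right)^{\frac{s}{12}\alpha(s)},
$$
the right-hand side above is strictly less than $1$, so that
$$
P\left(\overline{\bigcup_{S \subset V_n}(A_S \cup A'_S)}\right) > 0.
$$
By the reduction recorded in Section \ref{s3.1} this yields $R_{{\it D}}(s,s,d) > n$ for every such $n$, and hence the claimed bound $R_{{\it D}}(s,s,d) \ge (64/41)^{(s/12)(1+o(1))}$. A routine evaluation $\log_2(64/41)/12 \approx 0.0535$ confirms the stated numerical form $2^{0.053537 s}$.

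I do not expect a genuine obstacle in this corollary: all of the analytic difficulty has already been discharged inside Theorem \ref{T:K3-K4}, whose content is the improved $41/64$ single-event bound. The only thing demanding attention is the bookkeeping just described, namely checking that every polynomial and factorial prefactor ($n^s$, $s!$, and the hidden $(z+1)^2 a^z$-type factors already folded into the $o(1)$ in Theorem \ref{T:K3-K4}) is truly of order $2^{o(s^2)}$, so that it may legitimately be moved into the $(1+o(1))$ in the exponent, and matching the constant $64/41$ to the quoted decimal.
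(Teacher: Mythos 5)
Your proposal is correct and is exactly the paper's argument: the paper proves this corollary simply by noting it follows ``analogously to Corollary \ref{Cor:K3-K3}'', i.e.\ by the same union bound over $s$-subsets with the single-event estimate of Theorem \ref{T:K3-K4} in place of that of Theorem \ref{T:K3-K3}, which is precisely what you carry out. The only cosmetic difference is bookkeeping: the paper combines $\binom{n}{s}\cdot s!\le n^s$ as a falling factorial, whereas you bound $\binom{n}{s}\le n^s$ and absorb $s!=2^{o(s^2)}$ into the $(1+o(1))$ in the exponent; both are equally valid.
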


We use $\mathcal{P}(k,l)$ to denote the probability that the random graph $G(k,{1}/{2})$ does not have subgraphs isomorphic to $K_l$.
One can easily generalize the above described method (Corollaries \ref{Cor:K3-K3} and \ref{Cor6}). Thus, for $ d\in \{4, 5\} $ we obtain the following bound:
$$
R_{{\it D}}(s,s,d) \geq \left(\frac{1}{\mathcal{P}(k,3)}\right)^{{\frac {s}{k(k-1)}} (1+o(1))}.
$$
Let us note that in this bound the value $ o(1) $ depends both on $ k $ and $ s $, so we apply this bound for fixed $k$ and for $s$ that tends to infinity.

It is known that (see a more general claim in the next section)

$$
\mathcal{P}(k,3) = \frac{2^{k^2/4 + f_1(k)}}{2^{k\choose 2}} = 2^{-k^2/4 + f_2(k)}, ~~~ f_1(k) = o(k^2), ~~~ f_2(k) = o(k^2).
$$
Hence
$$
\left(\frac{1}{\mathcal{P}(k,3)}\right)^{\frac {s}{k(k-1)}} = 2^{(1/4-f_3(k))s}, ~~~ \lim_{k \to \infty} f_3(k) = 0.
$$
First we fix large $ k $, next choose a sufficiently large $ s $. Finally we get:
$$
R_{{\it D}}(s,s,d) \geq \left(\frac{1}{\mathcal{P}(k,3)}\right)^{\frac{s}{k(k-1)} (1+o(1))} = \left(2^{(1/4-f_3(k))s}\right)^{1+o(1)} >
2^{(1/4-\gamma)s}.
$$
This concludes the proof of Theorem \ref{T:main} for  $ d\in \{4, 5\} $.

\bigskip

\subsection{Cases $d \ge 6$}\label{s3.3}

We generalize the method, described in the previous section, to the case of arbitrary $ d $.
While there we considered triangles, now we deal with $ l $-cliques, where $ l = [d/2]+1 $. Instead of
$ F_k(\sigma,H) $ we consider random variables $ F^l_k(\sigma,H) $, where $ F^l_k(\sigma,H) $ is the number of such $ l $-cliques in $ \sigma(H) $
that are contained as a subgraph in one of the $k$-cliques from $Sys(S,k)$.

Let us give the analogue of Claim \ref{C:Mat(Fk)}.

\begin{claim} Fix natural $k, l$, $l\le k$. For every graph $ H $ with $ s $ vertices we have:
$$
\mathbb{E} \left(F^l_k(\sigma,H)\right) = \frac{(k-2) \cdot \ldots \cdot (k-l+1)cl(H,l)}{s^{l-2}}\left(1+\zeta^l_k(s)\right).
$$
\end{claim}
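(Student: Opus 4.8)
The plan is to generalize the computation in Claim~\ref{C:Mat(Fk)}, replacing triangles by $l$-cliques and using linearity of expectation over all $l$-cliques of $H$. Concretely, for a fixed clique $\Delta \in Cl(H,l)$ I would write
$$
\mathbb{E}\left(F^l_k(\sigma,H)\right) = \sum_{\Delta \in Cl(H,l)} \sum_{\sigma} \mathbb{I}\bigl(\sigma(\Delta) \text{ is a subgraph of some } K_k \in Sys(S,k)\bigr)\cdot \mathbb{P}(\sigma),
$$
so that the whole task reduces to evaluating the inner sum for a single $\Delta$, which by symmetry does not depend on which $l$-clique $\Delta$ is.

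The key combinatorial step is to count the permutations $\sigma$ for which $\sigma(\Delta)$ lands inside a fixed $K_k \in Sys(S,k)$. A copy of $K_l$ sits inside $K_k$ by choosing an ordered $l$-tuple of the $k$ vertices, of which there are $k(k-1)\cdots(k-l+1)$, and the remaining $s-l$ vertices may be permuted arbitrarily, contributing $(s-l)!$. Hence the number of permutations sending $\Delta$ into a \emph{given} $K_k$ is $(s-l)!\,k(k-1)\cdots(k-l+1)$, and summing over all $|Sys(S,k)|$ cliques (these are pairwise edge-disjoint, so no double counting of the target cliques occurs) gives
$$
\sum_{\sigma} \mathbb{I}\bigl(\sigma(\Delta) \subseteq \text{some } K_k\bigr) = (s-l)!\,k(k-1)\cdots(k-l+1)\cdot |Sys(S,k)|.
$$
Dividing by $s!$ and inserting $|Sys(S,k)| = \frac{s^2}{k(k-1)}(1+\xi_k(s))$ causes the leading factor $k(k-1)$ to cancel, leaving
$$
\frac{(s-l)!}{s!}\cdot k(k-1)\cdots(k-l+1)\cdot \frac{s^2}{k(k-1)}(1+\xi_k(s)) = \frac{(k-2)\cdots(k-l+1)\,s^2}{s(s-1)\cdots(s-l+1)}(1+\xi_k(s)).
$$
Since $\frac{s^2}{s(s-1)\cdots(s-l+1)} = \frac{1}{s^{l-2}}(1+o(1))$, I would absorb the $\frac{s^2}{s(s-1)\cdots(s-l+1)}$ correction together with $\xi_k(s)$ into a single error term, defining $\zeta^l_k$ exactly as $\psi_1$ and $\xi^1_k$ were defined from $\psi$ and $\xi_k$ in the earlier claims. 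Summing over the $cl(H,l)$ cliques $\Delta$ then yields the asserted formula.

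The only genuinely delicate point is the bookkeeping of the error factor $\zeta^l_k(s)$: one must check that the ratio $\frac{s^{l-2}\cdot s^2}{s(s-1)\cdots(s-l+1)}$ tends to $1$ as $s \to \infty$ for fixed $l$, so that it can legitimately be folded into a $(1+o(1))$ term, and that this holds uniformly enough to match the statement. This is a routine falling-factorial estimate rather than a real obstacle; the rest is a direct transcription of the $l=3$ argument, with the single subtlety that one should confirm the target $k$-cliques in $Sys(S,k)$, being pairwise edge-disjoint, cannot simultaneously contain the same image clique $\sigma(\Delta)$ (for $l\ge 3$ a $K_l$ has at least one edge, so it determines its host $K_k$ uniquely), ensuring the indicators add without overcounting.
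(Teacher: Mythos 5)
Your proof is correct and is exactly the argument the paper intends: the paper omits this proof, stating only that it parallels Claim~\ref{C:Mat(Fk)}, and your permutation count $(s-l)!\,k(k-1)\cdots(k-l+1)\cdot|Sys(S,k)|$, the cancellation against $\frac{s^2}{k(k-1)}(1+\xi_k(s))$, and the falling-factorial absorption into $\zeta^l_k(s)$ reproduce that omitted generalization faithfully. Your observation that edge-disjointness of the cliques in $Sys(S,k)$ prevents overcounting (since $\sigma(\Delta)$ contains an edge and hence determines its host $K_k$) is the one point requiring care, and you handled it correctly.
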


We omit here the proof of the claim, the corollary and futher calculations.

It is clear that finally one gets
$$
R_{{\it D}}(s,s,d) \geq \left(\frac{1}{\mathcal{P}(k,l)}\right)^{{\frac {s}{k(k-1)}} (1+o(1))},
$$
where for fixed $ d $ the value $ o(1) $ depends only on $ k $ and $ s $.

It was shown in the paper \cite{ER} that, for fixed natural $l$ greater than $ 3,$ the number of graphs with $ k $ vertices and without $ l $-cliques is
$$
2^{\frac{k^2}{2}\left(1-\frac{1}{l-1}\right)+f(k,l)},
$$
where the value of $ f(k,l) $ is $ o(k^2) $. Further calculations reproduce those from the end of the previous section.

\section{Proof of Proposition \ref{upper}}

Note that every  $ [d/2]$-partite graph can be realized as a distance graph in $ {\mathbb R}^d $. Indeed, consider circles $C_i$, $i=1,\ldots, [d/2]:$ 

$$C_i=\{(0,\ldots, 0, x_{2i-1},x_{2i},0,\ldots,0)\in \R^d:x_{2i-1}^2+x_{2i}^2=1/2\}.$$
 
 Embed the $i$th part of the multipartite graph into $C_i$. By Pithagoras' theorem, the distance between any two points from $C_i,C_j,$ for distinct  $i$ and $j,$ equals 1.

 So, to prove the proposition it is enough to show that for every graph with
$$
m = 2 \left[\frac{d}{2}\right] R\left(\left \lceil \frac{s}{[d/2]} \right \rceil, \left \lceil \frac{s}{[d/2]} \right \rceil\right)
$$
vertices the following holds: either the graph or its complement has $ [d/2] $ independent sets with total cardinality at least $ s $.
Take a graph $ G=(V,E) $ on $m$ vertices. Split its vertex set into $ t=2[d/2] $ parts so that each part has cardinality
$$
\frac{m}{t} = R\left(\left \lceil \frac{s}{[d/2]} \right \rceil, \left \lceil \frac{s}{[d/2]} \right \rceil\right).
$$
Let $ V_1, \dots, V_t $ denote these parts. Put $ G_1 = G[V_1] $, $ \dots, $ $ G_t = G[V_t] $. By the definition of the classical Ramsey number
for every $ i \in \{1, \dots, t\} $ either $ G_i $ or $ \bar{G}_i $ has an independent set with cardinality
$y= \left \lceil \frac{s}{[d/2]} \right \rceil $. Assume that (without loss of generality) there are at least $ [d/2] = t/2 $ indexes $ i $ such that
$ G_i $ has an independent set of size $y$. Take a union of the collection of $ G_i $ over $t/2$ such indexes $i$. The union is a subgraph in $ G,$ which is realizable as distance graph in $\R^d$ and  and already has  at least $yt/2$ vertices, and $ yt/2\ge s $. This concludes the proof.

\newpage

\end{document}